\newcommand{\upperRomannumeral}[1]{\uppercase\expandafter{\romannumeral#1}}
\theoremstyle{plain}
  \newtheorem{proposition}[]{Proposition}
  \newtheorem{lemma}[]{Lemma}
  \newtheorem{theorem}[]{Theorem}
  \newtheorem{conjecture}[]{Conjecture}
  \newtheorem{remark}[]{Remark}
\title[High dimensional Ising model]{The effect of free boundary conditions \\ on the Ising model in high dimensions}
\author{Federico Camia}
\address{Division of Science, NYU Abu Dhabi, Saadiyat Island, Abu Dhabi, UAE \& Department of Mathematics, Faculty of Science, Vrije Universiteit Amsterdam, De Boelelaan 1111, 1081 HV Amsterdam, The Netherlands.}
\email{federico.camia@nyu.edu}
\author{Jianping Jiang}
\address{Beijing Institute of Mathematical Sciences and Applications, No.11 Yanqi Lake West Road, Beijing 101407, China.}
\email{jianpingjiang11@gmail.com}
\author{Charles M. Newman}
\address{Courant Institute of Mathematical Sciences, New York University,
	251 Mercer st, New York, NY 10012, USA, \& NYU-ECNU Institute of Mathematical
	Sciences at NYU Shanghai, 3663 Zhongshan Road North, Shanghai 200062, China.}
\email{newman@cims.nyu.edu}
\begin{document}
\begin{abstract}
We study the critical Ising model with free boundary conditions on finite domains in $\mathbb{Z}^d$ with $d\geq4$. Under the assumption, so far only proved completely for high~$d$, that the critical infinite volume two-point function is of order $|x-y|^{-(d-2)}$ for large $|x-y|$, we prove the same is valid on large finite cubes with free boundary conditions, as long as $x, y$ are not too close to the boundary. This confirms a numerical prediction in the physics literature by showing that the critical susceptibility in a finite domain of linear size $L$ with free boundary conditions is of order $L^2$ as $L\rightarrow\infty$. We also prove that the scaling limit of the near-critical (small external field) Ising magnetization field with free boundary conditions is Gaussian with the same covariance as the critical scaling limit, and thus the correlations do not decay exponentially. This is very different from the situation in low~$d$ or the expected behavior in high~$d$ with bulk boundary conditions.
\end{abstract}

\maketitle

\section{Introduction}
\subsection{Overview}\label{sec:overview}
It was proved in \cite{Aiz82,Fro82,ADC20} that the scaling limit of the critical Ising
model on $\mathbb{Z}^d$ when $d\geq4$ is trivial in the sense that any subsequential limit
of the Ising magnetization field is Gaussian. Let
$\langle \sigma_x\sigma_y\rangle_{\mathbb{Z}^d}$ be the critical two-point function on
$\mathbb{Z}^d$ with $d\geq 4$. It is known that
\begin{equation}\label{eq:twopointf}
	c|x-y|^{-(d-1)}\leq \langle \sigma_x\sigma_y\rangle_{\mathbb{Z}^d}\leq C|x-y|^{-(d-2)},~\forall x\neq y\in\mathbb{Z}^d,
\end{equation}
where $|x-y|$ stands for the Euclidean distance between $x$ and $y$. The upper bound in
\eqref{eq:twopointf} is proved in \cite{FSS76,Sok82} by the infrared bound, while the lower
bound can be proved by the Simon-Lieb inequality \cite{Sim80,Lie80}. A matching lower
bound for $\langle \sigma_x\sigma_y\rangle_{\mathbb{Z}^d}$ (i.e., with the
power $-(d-2)$) is so far only proved for $d$ sufficiently large, by the lace expansion
method --- see \cite{Sak07,Sak20}.

There are very few rigorous results about the high dimensional Ising model on finite
domains; however, there are many nonrigorous results (see, e.g.,
\cite{LM11,BKW12,LM14,LM16,ZGFDG18,FGZD20} and references therein). Despite the intense interest in the topic, in the physics literature there is still a debate about the behavior of the susceptibility in the Ising model on finite domains with free boundary conditions inside a critical window (see \cite{BKW12,LM14,LM16,ZGFDG18}), and that is one of the main motivations of this paper. Indeed, our first result (Theorem \ref{thm1}) implies that, assuming the decay rate \eqref{eq:assumption} for the critical two-point function in the thermodynamic limit, the critical susceptibility in a domain of linear size $n$ with free boundary conditions is of order $n^2$, confirming a prediction in \cite{LM11,LM14,ZGFDG18}---see Remark \ref{rem:debate} below for more details. We note that \eqref{eq:assumption} has been rigorously proved in sufficiently high dimensions and is believed to hold in any dimension $d \geq 4$.

We also study the magnetization field in systems with a vanishing external field. In two dimensions, it was shown \cite{CGN16,CJN20a} that choosing the external field to be proportional to an appropriate power of the lattice spacing and then sending the lattice spacing to zero (often called a near-critical scaling limit) produces a near-critical field which looks critical at short distances but exhibits exponential decay of correlations at long distances. This is true in the thermodynamic limit as well as in finite systems, regardless of the boundary conditions. Somewhat surprisingly, our second result (Theorem \ref{thm2}), combined with Theorem \ref{thm1}, shows that this is not the case in dimension $d>4$. While we expect that for $d>4$ the near-critical scaling limit does lead to a continuum field with exponential decay of correlations in the case of infinite-volume systems (see Conjecture \ref{con1}), our results indicate that the choice of free boundary conditions for a finite system with an external field prevents the system from getting near criticality in the scaling limit even when the external field is scaled to zero at the rate corresponding to the near-critical regime for the infinite-volume system.

We now review some rigorous results.  We will see that unlike for $d=2$,
correlations in high~$d$ depend on the boundary conditions very much. Let
$\langle \cdot\rangle_{\Lambda_n}^+$,  $\langle \cdot\rangle_{\Lambda_n}^p$,
and  $\langle \cdot\rangle_{\Lambda_n}^f$ denote the expectation for the critical Ising
model on $\Lambda_n:=[-n,n]^d$ with respect to all plus, periodic, and free boundary
conditions respectively. By Theorem~1 in Section 4.1 of \cite{Pap06}, one has
\begin{equation}\label{eq:tpp}
	\langle \sigma_0\sigma_x\rangle_{\Lambda_n}^p\geq \begin{cases} C_1|x|^{-(d-2)},&|x|\leq C_3n^{\frac{d}{2(d-2)}}\\
		C_2n^{-d/2},&|x|\geq C_3n^{\frac{d}{2(d-2)}}.
	\end{cases}
\end{equation}
It is conjectured in \cite{Pap06} that this piecewise function should be the
correct behavior (i.e., with the first $\geq$ replaced by $\approx$). Results similar to \eqref{eq:tpp} for simple random walk and weakly self-avoiding walk can be found in \cite{Sla20}. The main result in \cite{HHS19} is that under the assumption of \eqref{eq:assumption} below, for any $\epsilon>0$
\begin{equation}
	\langle \sigma_0\rangle_{\Lambda_n}^+\geq C_4n^{-1-\epsilon}.
\end{equation}
Therefore, by the GKS \cite{Gri67,KS68} and FKG inequalities, we have that for any $\epsilon>0$
\begin{equation}\label{eq:tp+}
	\langle \sigma_x\sigma_y\rangle_{\Lambda_n}^+\geq \langle \sigma_x\rangle_{\Lambda_n}^+\langle \sigma_y\rangle_{\Lambda_n}^+\geq \langle \sigma_0\rangle_{\Lambda_{2n}}^+\langle \sigma_0\rangle_{\Lambda_{2n}}^+\geq C_4^2(2n)^{-2-2\epsilon},~ \forall x,y\in\Lambda_n.
\end{equation}
For free boundary conditions, by the GKS inequalities and \eqref{eq:twopointf}, we have
\begin{equation}\label{eq:twopointfu}
	\langle\sigma_x\sigma_y\rangle_{\Lambda_{n}}^f\leq \langle\sigma_x\sigma_y\rangle_{\mathbb{Z}^d}\leq C|x-y|^{-(d-2)},~ \forall x,y\in\Lambda_n.
\end{equation}
One main aim of the current paper is to give a matching lower bound in \eqref{eq:twopointfu} when $x$ and $y$ are not too close to the boundary of $\Lambda_n$.

\subsection{Main results}
Let $\Lambda\subset\mathbb{Z}^d$ be finite. The classical Ising model on $\Lambda$ at inverse temperature $\beta$ with free boundary conditions and external field $H\in {\mathbb R}$ is defined by the probability measure $\mathbb{P}^f_{\Lambda,\beta,H}$ on $\{-1,+1\}^{\Lambda}$ such that for each $\sigma\in\{-1,+1\}^{\Lambda}$
\begin{equation}\label{eq:Isingdef}
	\mathbb{P}^f_{\Lambda,\beta,H}(\sigma)=\frac{\exp\left[\beta\sum_{\{x,y\}}\sigma_x\sigma_y+H\sum_{x\in\Lambda}\sigma_x\right]}{Z^f_{\Lambda,\beta,H}},
\end{equation}
where the first sum is over all  nearest-neighbor pairs in $\Lambda$, and
$Z^f_{\Lambda,\beta,H}$ is the partition function that makes \eqref{eq:Isingdef}
a probability measure.  In a more general formulation, the first sum is replaced by $\sum_{\{x,y\}}J_{xy}\sigma_x\sigma_y$, where $J_{xy}$ is the \emph{coupling constant} between $x$ and $y$. Later on we will consider models in which some of the coupling constants are set to zero.

Let $\langle\cdot\rangle^f_{\Lambda,\beta,H}$ denote the expectation
with respect to $\mathbb{P}^f_{\Lambda,\beta,H}$. It is well-known that
$\mathbb{P}^f_{\Lambda,\beta,H}$ converges 
to the infinite
volume measure $\mathbb{P}^f_{\mathbb{Z}^d,\beta,H}$ as $\Lambda\uparrow \mathbb{Z}^d$.
Let $\beta_c=\beta_c(\mathbb{Z}^d)$ be the inverse critical temperature.
It is also well-known that the limit $\mathbb{P}^f_{\mathbb{Z}^d,\beta,H}$
doesn't depend on the boundary conditions and the sequence $\Lambda\uparrow \mathbb{Z}^d$
if $\beta\leq\beta_c$ or $H>0$. In this paper, we focus on the critical and near-critical
Ising model, that is, we always fix $\beta=\beta_c$ unless otherwise stated; so we will typically
drop the $\beta$ subscript in $\mathbb{P}^f_{\mathbb{Z}^d,\beta,H}$ and
$\langle\cdot\rangle^f_{\Lambda,\beta,H}$ from now on. (Note however that Propositions \ref{prop:backbone} and \ref{prop:comp} below are valid for any $\beta$.)
We will also write
$\mathbb{P}^f_{\Lambda}$ and $\langle\cdot\rangle^f_{\Lambda}$ for
$\mathbb{P}^f_{\Lambda,\beta,H}$ and $\langle\cdot\rangle^f_{\Lambda,\beta,H}$,
respectively, if $\beta=\beta_c$ and $H=0$. It is expected that
for $d \geq 4$, the infinite volume two-point function behaves like
\begin{equation}\label{eq:assumption}
	c|x-y|^{-(d-2)}\leq \langle \sigma_x\sigma_y\rangle_{\mathbb{Z}^d}\leq C|x-y|^{-(d-2)},~\forall x\neq y\in\mathbb{Z}^d.
\end{equation}
As we mentioned previously, the upper bound in \eqref{eq:assumption} is proved in \cite{FSS76,Sok82} but the lower bound is only proved for sufficiently large $d$ in \cite{Sak07,Sak20}. In this paper, we use $c$ for constants which are usually ``small'' and $C$ for constants which are usually ``large;'' their actual values (depending only on $d$) may change from place to place. Our first main result is
\begin{theorem}\label{thm1}
	Suppose that $\eqref{eq:assumption}$ holds. Then there exist constants
$M>1$ and $c_1>0$ (depending only on $d$) such that the following holds uniformly in
$n\in\mathbb{N}$. For the Ising model on $\Lambda_{Mn}:=[-Mn,Mn]^d$ at
$\beta=\beta_c$ and $H=0$ with free boundary conditions, we have
	\begin{equation}\label{eq:mainresult}
		c_1|x-y|^{-(d-2)}\leq \langle \sigma_x\sigma_y\rangle_{\Lambda_{Mn}}^f\leq C|x-y|^{-(d-2)},~\forall x\neq y\in \Lambda_{n}.
	\end{equation}
\end{theorem}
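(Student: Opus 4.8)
The plan is to prove the two bounds in \eqref{eq:mainresult} separately, since the upper bound is immediate and essentially all the difficulty is in the lower bound. For the upper bound I would simply quote \eqref{eq:twopointfu}: the GKS inequalities give $\langle\sigma_x\sigma_y\rangle^f_{\Lambda_{Mn}}\le\langle\sigma_x\sigma_y\rangle_{\mathbb{Z}^d}$, and the right-hand side is at most $C|x-y|^{-(d-2)}$ by the (unconditional) upper bound in \eqref{eq:assumption}; this needs no constraint on $M$. I attack the lower bound by a reduction to a single centered configuration, followed by the backbone representation.

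The reduction uses exactly the device of setting couplings to zero announced after \eqref{eq:Isingdef}. Given $x,y\in\Lambda_n$, let $m$ be their midpoint (which lies in $\Lambda_n$) and let $B$ be the box of radius $K|x-y|$ centered at $m$, where $K$ is a large dimension-dependent constant fixed below. Setting to zero every coupling not lying entirely inside $B$ can only decrease the two-point function, by GKS, and turns $B$ into an isolated free-boundary system; hence $\langle\sigma_x\sigma_y\rangle^f_{\Lambda_{Mn}}\ge\langle\sigma_x\sigma_y\rangle^f_{B}$, provided $B\subset\Lambda_{Mn}$, which holds as soon as $M$ is large compared to $K$ (because $m\in\Lambda_n$ and $|x-y|\le 2\sqrt d\,n$). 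In $B$ the points $x,y$ sit near the center at distance $\sim K|x-y|$ from $\partial B$, so the whole theorem reduces to the centered, bounded-aspect-ratio estimate $\langle\sigma_u\sigma_v\rangle^f_{B}\ge c_1|u-v|^{-(d-2)}$ for $u,v$ near the center of a box of radius $\sim K|u-v|$.

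To treat this centered case I would apply the backbone representation, Proposition~\ref{prop:backbone}, expressing $\langle\sigma_u\sigma_v\rangle^f_{B}$ as a sum of nonnegative weights over backbones (walks) from $u$ to $v$ inside $B$. By positivity, discarding every backbone that comes close to $\partial B$ only lowers the sum, and Proposition~\ref{prop:comp} is exactly the tool that compares, for a backbone staying in the bulk of $B$, its free-boundary finite-volume weight with its $\mathbb{Z}^d$ weight: the dressing (sourceless-current) part of the weight of a bulk backbone is, up to a multiplicative constant, insensitive to whether the ambient medium is $B$ with free boundary or all of $\mathbb{Z}^d$. Granting this comparison, the problem becomes the purely infinite-volume statement that the contribution to $\langle\sigma_u\sigma_v\rangle_{\mathbb{Z}^d}$ from backbones confined to the box of radius $\sim K|u-v|$ around $u,v$ is at least a fixed fraction of $\langle\sigma_u\sigma_v\rangle_{\mathbb{Z}^d}\asymp|u-v|^{-(d-2)}$, the last equivalence being \eqref{eq:assumption}.

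The hard part is this confinement estimate: I must show that long backbone excursions carry only a vanishing fraction of the two-point function as $K\to\infty$, uniformly in the separation $|u-v|$, so that $K$ (and hence $M$) can be taken to be a fixed constant. The naive route of bounding the excursion probability by the tree-graph/BK sum $\sum_{w}\langle\sigma_u\sigma_w\rangle\langle\sigma_w\sigma_v\rangle$ over $w$ at distance $\gtrsim K|u-v|$ is fatally lossy: by \eqref{eq:assumption} that sum is of order $(K|u-v|)^{4-d}$, which exceeds the target $|u-v|^{-(d-2)}$ by the diffusive factor $K^{4-d}|u-v|^{2}$ and therefore says nothing when $|u-v|$ is comparable to the side of $B$. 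Overcoming this is the crux, and it is where the mean-field ($d>4$) geometry must be used: the dressed backbone is diffusive, so a bridge from $u$ to $v$ at separation $r$ has length of order $r^{2}$ and stays within $O(Kr)$ of the segment $[u,v]$ with probability bounded below uniformly in $r$. I expect to make this precise by resumming the backbone into a random-walk Green's function and comparing, for points centered in a box of bounded aspect ratio, the Green's function killed on $\partial B$ with the free $\mathbb{Z}^d$ Green's function; this comparison is a standard random-walk fact and, crucially, does not suffer the diffusive loss that defeats the diagrammatic bound.
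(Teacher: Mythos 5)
Your upper bound and the GKS reduction to a centered box $B$ of radius $K|x-y|$ are both valid, and you have correctly located the crux: showing that backbones forced to exit the box carry a negligible fraction of $\langle\sigma_u\sigma_v\rangle_{\mathbb{Z}^d}$, with the correct observation that the naive diagrammatic bound loses a factor $K^{4-d}|u-v|^2$ and is therefore useless. But at that crux your proposal has a genuine gap, in two respects. First, you misstate what Proposition \ref{prop:comp} does: it is \emph{not} a comparison between the finite-volume and infinite-volume weights of a bulk backbone; it is a reflection inequality, $\langle\sigma_u\sigma_y\rangle_{D\setminus\bar A,\beta}^f\geq\langle\sigma_u\sigma_{\bar y}\rangle_{D\setminus\bar A,\beta}^f$, comparing the correlation of $u$ (lying on the reflection hyperplane) with a point and with its mirror image, inside one and the same edge-diluted domain. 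Second, your actual plan for the confinement estimate --- ``resumming the backbone into a random-walk Green's function'' and invoking the killed-versus-free Green's function comparison --- is not an available tool. The Ising backbone is not a random walk: its weight $\rho$ carries a sourceless-current dressing depending on the ambient domain and on the edges it cancels, and the only rigorous control of its geometry is the lace expansion, which holds in infinite volume for very large $d$ (and would in any case restrict you to $d>4$, whereas the theorem is conditional only on \eqref{eq:assumption} for $d\geq4$). Asserting that the backbone in a finite box with free boundary conditions is diffusive and bulk-like is essentially assuming the conclusion of the theorem.

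The idea you are missing is the reflection/unfolding trick, which is exactly what Proposition \ref{prop:comp} is designed for, used in tandem with the switching lemma (which your proposal never invokes). By the switching lemma, the deficit $\langle\sigma_x\sigma_y\rangle_{\Lambda}^f-\langle\sigma_x\sigma_y\rangle_{\Lambda_{Mn}}^f$ equals the weight of currents with $\partial\mathbf{n}=\{x,y\}$ whose connection from $x$ to $y$ must leave $\Lambda_{Mn}$, hence is at most the $\rho$-weight of backbones $\omega$ from $x$ to $y$ exiting $\Lambda_{Mn}$. Splitting $\omega=\omega_1\circ\omega_2$ at its first boundary hit $u$ on a face $F_i$, factorizing via Proposition \ref{prop:backbone}(b), and then applying Proposition \ref{prop:comp} with the hyperplane through $F_i$ (the cancelled edges $\tilde\omega_1$ lie on the inner side, so the hypothesis on $\bar A$ is met) gives $\langle\sigma_u\sigma_y\rangle_{\mathbb{Z}^d\setminus\tilde\omega_1}\leq\langle\sigma_u\sigma_{y^i}\rangle_{\mathbb{Z}^d\setminus\tilde\omega_1}$, where $y^i$ is the mirror image of $y$ across that face. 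Resumming over $\omega_2$ and then $\omega_1$ reconstitutes full two-point functions and yields $\langle\sigma_x\sigma_y\rangle_{\Lambda_{Mn}}^f\geq\langle\sigma_x\sigma_y\rangle_{\mathbb{Z}^d}-\sum_{i=1}^{2d}\langle\sigma_x\sigma_{y^i}\rangle_{\mathbb{Z}^d}$. Since each $|x-y^i|\geq 2(M-1)n$ while $|x-y|\leq 2\sqrt d\,n$, assumption \eqref{eq:assumption} gives a lower bound $c|x-y|^{-(d-2)}-2dC(2(M-1)n)^{-(d-2)}$, which is positive and of the right order once $M$ is a large constant. This converts the escape event into a \emph{single} two-point function at distance of order $Mn$ --- there is no sum over intermediate points, hence no diffusive loss, and no random-walk picture or $d>4$ geometry is needed.
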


\begin{remark}\label{rem:debate}
	The debate in the physics literature that we mentioned in Section \ref{sec:overview} is about the growth of the susceptibility
	\begin{equation}
		\chi_n(\beta):=\sum_{x,y\in\Lambda_n}\left[\langle\sigma_x\sigma_y\rangle_{\Lambda_n,\beta}^f-\langle\sigma_x\rangle_{\Lambda_n,\beta}^f\langle\sigma_y\rangle_{\Lambda_n,\beta}^f\right]/(2n+1)^d=\sum_{x,y\in\Lambda_n}\langle\sigma_x\sigma_y\rangle_{\Lambda_n,\beta}^f/(2n+1)^d.
	\end{equation}
It was argued in \cite{BKW12,ZGFDG18} that the maximum of $\chi_n(\beta)$ for $\beta$ in a neighborhood of $\beta_c$ grows like $n^{d/2}$ while in
\cite{LM11,LM14,LM16} it was argued that the order should be $n^2$.
Theorem~\ref{thm1} shows that $\chi_n(\beta_c)$ is of order $n^2$ under assumption~\eqref{eq:assumption}, and thus confirms a numerical prediction in \cite{LM11,LM14,ZGFDG18}.
That assumption is widely accepted in the
physics literature and rigorously proved for $d$ large, as noted earlier.
\end{remark}

\begin{remark}
	A similar result for critical Bernoulli percolation in high~$d$ can be
found in Theorem 1.2 of \cite{CH20}. Actually, Theorem 1.2 of \cite{CH20} is stronger than our Theorem~\ref{thm1}. That is, a result analogous to \eqref{eq:mainresult}
for critical Bernoulli percolation holds for all $M>1$, with $c_1$ and $C$ depending only on $M$ and $d$.
It is expected that such a stronger result should also hold for the Ising model.
\end{remark}

For the Ising model on $\mathbb{Z}^d$ with $d>4$ at $\beta=\beta_c$ and $H\downarrow0$,
it is expected that
\begin{equation}\label{eq:corrconj}
	\langle \sigma_x;\sigma_y\rangle_{\mathbb{Z}^d,H}:=	\langle \sigma_x\sigma_y\rangle_{\mathbb{Z}^d,H}-	\langle \sigma_x\rangle_{\mathbb{Z}^d,H}\langle \sigma_y\rangle_{\mathbb{Z}^d,H}\approx C_1\exp\left[-C_2H^{2/(d+2)}|x-y|\right].
\end{equation}
See (2.3) and (2.4) of \cite{BKW12} or (3.174) of \cite{Rao19} for the conjecture that the correlation length in the near-critical case (as $H\downarrow0$) behaves like $H^{-2/(d+2)}$. Now consider the Ising model with external field $H$ on the rescaled lattice $a\mathbb{Z}^d$ where $d>4$ and $a$ is small. From \eqref{eq:corrconj}, we expect to choose the external field $H=a^{(d+2)/2}h$ in
order to obtain a nontrivial scaling limit. Here is
such a conjecture about the near-critical scaling limit.
\begin{conjecture} \label{con1}
	Consider the near-critical Ising model on $a\mathbb{Z}^d$ with $d>4$, $\beta=\beta_c$ and $H=a^{(d+2)/2}h$ for some $h>0$. Then as $a\downarrow 0$,
	\begin{equation}
	\Phi^{a,h}:=a^{(d+2)/2}\sum_{x\in a\mathbb{Z}^d} [\sigma_x-\langle\sigma_x\rangle_{\mathbb{Z}^d,H}]\delta_x \Longrightarrow
\text{ massive Gaussian free field on }\mathbb{R}^d.
	\end{equation}
Here $\delta_x$ is a unit Dirac point measure at $x$ and $\Longrightarrow$
stands for convergence in distribution. The covariance function of the limiting field is proportional to the kernel of the operator $(-\Delta+m^2)^{-1}$ for some $m>0$, which decays exponentially.
\end{conjecture}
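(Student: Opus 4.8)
\medskip

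\noindent\emph{A strategy for proving Conjecture \ref{con1}.} The plan is to follow the standard three-step route to a Gaussian scaling limit --- tightness, Gaussianity of every subsequential limit, and identification of the covariance --- now carried out in the near-critical regime $H=a^{(d+2)/2}h$ with $h>0$. I would test the field against Schwartz functions, writing $\langle\Phi^{a,h},f\rangle=a^{(d+2)/2}\sum_{x\in a\mathbb{Z}^d}[\sigma_x-\langle\sigma_x\rangle_{\mathbb{Z}^d,H}]f(x)$, and prove convergence in distribution of these real random variables together with tightness in a negative-index Sobolev space $H^{-s}(\mathbb{R}^d)$. For tightness it suffices to control the second moment: the variance of $\langle\Phi^{a,h},f\rangle$ equals $a^{d+2}\sum_{x,y}\langle\sigma_x;\sigma_y\rangle_{\mathbb{Z}^d,H}f(x)f(y)$, and since the GHS inequality makes the truncated two-point function nonincreasing in $H\ge0$ one has $\langle\sigma_x;\sigma_y\rangle_{\mathbb{Z}^d,H}\le\langle\sigma_x\sigma_y\rangle_{\mathbb{Z}^d}$, so by \eqref{eq:assumption} this is at most $Ca^{2d}\sum_{x,y\in a\mathbb{Z}^d}|x-y|^{-(d-2)}|f(x)||f(y)|$, a Riemann sum converging to the finite integral $C\iint|x-y|^{-(d-2)}|f(x)||f(y)|\,dx\,dy$ because $d-2<d$. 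A uniform such bound in $H^{-s}(\mathbb{R}^d)$ for $s$ large gives the required compactness.

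Second, Gaussianity. I would show that every truncated cumulant of $\langle\Phi^{a,h},f\rangle$ of order $\ge 3$ vanishes as $a\downarrow0$. The representative case is the fourth Ursell function: the fourth cumulant is $a^{2(d+2)}\sum u_4(x_1,\dots,x_4)\prod_i f(x_i)$, and the Aizenman--Fr\"ohlich inequality --- valid for all $\beta$ and all $H\ge0$ via the switching lemma in the ghost-vertex formulation --- provides a bound of the one-vertex form $|u_4|\le 2\sum_y\prod_{i=1}^4\langle\sigma_{x_i}\sigma_y\rangle$. Inserting \eqref{eq:assumption} and passing to Riemann integrals, the $a^{2(d+2)}$ prefactor combines with this bound to leave a net factor $a^{d-4}$, which tends to $0$ exactly when $d>4$; this is the high-dimensional triviality mechanism of \cite{Aiz82,Fro82}. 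The odd truncated cumulants vanish at $H=0$ by spin-flip symmetry and for small $H$ acquire an extra factor of order $H=a^{(d+2)/2}h$, so they vanish even faster, while the higher even cumulants are controlled by the standard tree-graph inequalities and also vanish for $d>4$. Hence every subsequential limit is a centered Gaussian field.

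Third, and hardest, the covariance. One must establish that, with $H=a^{(d+2)/2}h$,
\[
a^{d+2}\sum_{x,y\in a\mathbb{Z}^d}\langle\sigma_x;\sigma_y\rangle_{\mathbb{Z}^d,H}\,f(x)g(y)\longrightarrow A\iint_{\mathbb{R}^d\times\mathbb{R}^d}G_m(x-y)\,f(x)g(y)\,dx\,dy,
\]
where $G_m$ is the kernel of $(-\Delta+m^2)^{-1}$, $m=\mathrm{const}\cdot h^{2/(d+2)}>0$, and $A>0$ is a field-strength amplitude. The powers of $a$ balance: the conjectured correlation length $H^{-2/(d+2)}$ is of order $a^{-1}$ in lattice units, hence of order one after rescaling by $a$, and the lattice massive Green's function $G^{\mathrm{lat}}_{am}$ obeys $G^{\mathrm{lat}}_{am}(x/a)\approx a^{d-2}G_m(x)$; the $a^{d+2}$ prefactor together with this $a^{d-2}$ gives $a^{2d}=a^d\cdot a^d$, precisely the normalization turning the double sum over $a\mathbb{Z}^d$ into a convergent double integral. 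Making this rigorous is exactly the content of the conjectured estimate \eqref{eq:corrconj}, and this is where I expect the main obstacle to lie: one needs the correlation length to be of order $H^{-2/(d+2)}$ \emph{exactly}, the amplitude $A$ to be finite and strictly positive, and the full short-to-long distance crossover profile to converge to the continuum massive Green's function --- none of which is delivered by the order-of-magnitude bounds used in the first two steps.

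I would attack this last step with a lace expansion carried out uniformly in the small-field regime, extending the $H=0$ analysis of \cite{Sak07,Sak20} to $H=a^{(d+2)/2}h$; an equally sharp random-walk representation would be an alternative. The most delicate point is to show that $m>0$ in the limit, i.e.\ that the bulk correlations genuinely become massive and decay exponentially rather than remaining massless. This is precisely what distinguishes the infinite-volume behavior conjectured here from the free-boundary behavior established in Theorem~\ref{thm2}, where the same near-critical scaling instead reproduces the massless critical covariance; controlling the mass quantitatively, and not merely establishing its existence, is the crux of a full proof.
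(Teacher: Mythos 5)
The statement you are addressing is Conjecture \ref{con1}: the paper offers \emph{no proof} of it, and indeed presents the massive near-critical scaling limit on all of $a\mathbb{Z}^d$ as open (this is part of the point of the paper, since Theorems \ref{thm1} and \ref{thm2} show the opposite, massless, behavior for finite volumes with free boundary conditions). So there is no proof in the paper to compare yours against; the only question is whether your outline settles the conjecture, and it does not --- as you yourself concede in your closing paragraphs.

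Your first two steps are sound in outline and follow the classical triviality route of \cite{Aiz82,Fro82}: GHS monotonicity of $\langle\sigma_x;\sigma_y\rangle_{\mathbb{Z}^d,H}$ in $H\geq 0$ does give the uniform variance bound, and the one-vertex bound on $u_4$ combined with \eqref{eq:assumption} does yield the net factor $a^{d-4}$ (your power counting is correct), so Gaussianity of subsequential limits is plausible modulo standard technical work (ghost-spin versions of the inequalities at $H>0$, odd cumulants, tightness). But Gaussianity with \emph{some} covariance is not the conjecture. The entire content of Conjecture \ref{con1} is your third step: that the limiting covariance is the kernel of $(-\Delta+m^2)^{-1}$ with $m>0$ strictly positive --- equivalently, that the truncated two-point function at $H=a^{(d+2)/2}h$ develops exponential decay at the lattice scale $a^{-1}h^{-2/(d+2)}$, i.e., the conjectured estimate \eqref{eq:corrconj}, which is itself open. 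The bounds you use in steps 1 and 2 are upper bounds by the \emph{critical} ($H=0$) two-point function and are therefore mass-blind: they cannot distinguish the massive limit asserted by the conjecture from the massless limit that Theorem \ref{thm2} shows actually occurs in finite volume with free boundary conditions, so nothing in your argument excludes $m=0$. Invoking ``a lace expansion uniform in the small-field regime'' names a research program, not an argument; no step you give produces a lower bound on the mass. Your proposal is thus an honest reduction of the conjecture to its hardest part, not a proof; the gap is exactly where you locate it, and that gap \emph{is} the conjecture.
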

We remark that when $d=2$, a non-Gaussian scaling limit of $\Phi^{a,h}$ is
established in \cite{CGN16} and exponential decay of the
limiting field is proved in \cite{CJN20a,CJN20b}.

We next consider the near-critical Ising model on the finite domain $\Lambda^a_L:=[-L,L]^d\cap a\mathbb{Z}^d$. Let $\Phi^{a,h}_{\Lambda_L}$ be the magnetization field
\begin{equation}
	\Phi^{a,h}_{\Lambda_L}:=a^{(d+2)/2}\sum_{x\in \Lambda^a_L} \sigma_x\delta_x,
\end{equation}
where the superscript $h$ in $\Phi^{a,h}_{\Lambda_L}$ indicates that $\{\sigma_x\}_{x \in \Lambda^a_L}$ is distributed according to $\mathbb{P}^f_{\Lambda^a_L,H}$ with $H=ha^{(d+2)/2}$.
Aizenman's arguments in \cite{Aiz82} (see also \cite{ADC20}) should also imply that each
subsequential limit of $\Phi^{a,0}_{\Lambda_L}$ is Gaussian. When $d=2$, it is proved
in \cite{CJN20a} that the scaling limit of $\Phi^{a,h}_{\Lambda_L}$ with $h \neq 0$ already exhibits
exponential decay; loosely speaking, the truncated correlation of the limiting
field decays exponentially with a rate depending linearly on the distance.
Our next result is that there is no exponential decay in the near-critical scaling
limit if $d>4$,  at least when one first takes $a \to 0$ for fixed $L$ with
free boundary conditions and then takes $L \to \infty$.
The tightness of $\{\Phi^{a,0}_{\Lambda_L}: a>0\}$ can be established
by a combination of the methods in Appendix~A of~\cite{CGN15} and Theorem \ref{thm1}.
Even though it has not been proved that all subsequential limits are the same
(i.e., uniqueness of the limit), for the sake of simplicity, we will assume
\begin{equation}\label{eq:assumption2}
	\Phi^{a,0}_{\Lambda_L}\Longrightarrow \Phi_{\Lambda_L} \text{ as } a\downarrow 0
\end{equation}
under the topology of $\mathcal{H}^{-3}(\Lambda_L)$ (see \cite{CGN15} for the definition)
but cautious readers may use a subsequence instead.
\begin{theorem}\label{thm2}
	Suppose that \eqref{eq:assumption2} holds and assume that the field $\Phi_{\Lambda_L}$, obtained by taking the scaling limit of $\Phi^a_{\Lambda_L}$ with free boundary conditions on $\Lambda^a_L$,
	is a Gaussian field with covariance function $G_{\Lambda_L}$. Then for any $h>0$, with free boundary conditions,
	\begin{equation}
		\Phi^{a,h}_{\Lambda_L}\Longrightarrow \Phi^h_{\Lambda_L}
	\end{equation}
	under the topology of $\mathcal{H}^{-3}(\Lambda_L)$, where $\Phi^h_{\Lambda_L}$ is also
Gaussian with the same covariance function $G_{\Lambda_L}$ and mean given by
		\begin{equation}
			\langle \Phi^h_{\Lambda_L}(f)\rangle=h\int_{\Lambda_L}\int_{\Lambda_L}f(z)G_{\Lambda_L}(z,w)dzdw
		\end{equation}
	for each $f\in\mathcal{H}^3(\Lambda_L)$.
	
	Moreover, letting $M_{\Lambda_L} := \Phi_{\Lambda_L}(1[\Lambda_L])$ denote the total magnetization in $\Lambda_L$, where $1[\cdot]$ denotes the indicator function, we have that the Radon-Nikodym derivative of the law of $\Phi^h_{\Lambda_L}$ with respect to that of $\Phi_{\Lambda_L}$ is given by the Wick exponential of the mean-zero Gaussian random variable $hM_{\Lambda_L}$, i.e., $\exp\left[hM_{\Lambda_L} - \frac{h^2}{2}\emph{Var}(M_{\Lambda_L})\right]$.
\end{theorem}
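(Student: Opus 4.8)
The plan is to realize the external-field measure as an explicit exponential tilt of the $H=0$ measure and then push this tilt through the scaling limit, reducing everything to the classical Cameron--Martin formula for Gaussian measures. Write $M^a:=\Phi^{a,0}_{\Lambda_L}(1[\Lambda_L])=a^{(d+2)/2}\sum_{x\in\Lambda^a_L}\sigma_x$ for the rescaled total magnetization. Because $H=ha^{(d+2)/2}$, the external-field term in the Hamiltonian is exactly $H\sum_{x}\sigma_x=hM^a$, so $\mathbb{P}^f_{\Lambda^a_L,H}$ is obtained from $\mathbb{P}^f_{\Lambda^a_L,0}$ by reweighting with the density $e^{hM^a}/\langle e^{hM^a}\rangle^f_{\Lambda^a_L,0}$. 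Since $\Phi^{a,h}_{\Lambda_L}$ and $\Phi^{a,0}_{\Lambda_L}$ are the same functional of the spins and only the underlying law differs, for every bounded continuous $F$ on $\mathcal{H}^{-3}(\Lambda_L)$ one has the exact identity
$$\langle F(\Phi^{a,h}_{\Lambda_L})\rangle^f_{\Lambda^a_L,H}=\frac{\langle F(\Phi^{a,0}_{\Lambda_L})\,e^{hM^a}\rangle^f_{\Lambda^a_L,0}}{\langle e^{hM^a}\rangle^f_{\Lambda^a_L,0}}.$$
As $1[\Lambda_L]$ lies in $\mathcal{H}^3(\Lambda_L)$ (constants belong to every Sobolev space on a bounded domain), the map $\Phi\mapsto\Phi(1[\Lambda_L])$ is continuous on $\mathcal{H}^{-3}(\Lambda_L)$, so assumption \eqref{eq:assumption2} upgrades to the joint convergence $(\Phi^{a,0}_{\Lambda_L},M^a)\Longrightarrow(\Phi_{\Lambda_L},M_{\Lambda_L})$, with $M_{\Lambda_L}=\Phi_{\Lambda_L}(1[\Lambda_L])$ a mean-zero Gaussian.

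The key analytic input, and the step I expect to be the main obstacle, is a uniform-in-$a$ exponential integrability bound for $M^a$: weak convergence alone does not transmit an exponential reweighting to the limit. I would obtain this from the GHS inequality. Let $m(H):=\langle\sum_{x}\sigma_x\rangle^f_{\Lambda^a_L,H}=\frac{d}{dH}\log Z^f_{\Lambda^a_L,\beta_c,H}$. Spin-flip symmetry gives $m(0)=0$, while GHS says $m$ is concave on $[0,\infty)$ and odd; hence $m'(H)=\mathrm{Var}^f_{\Lambda^a_L,H}(\sum_x\sigma_x)=\frac{d^2}{dH^2}\log Z^f_{\Lambda^a_L,\beta_c,H}$ is maximized at $H=0$. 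A second-order Taylor bound then yields $\log Z^f_{\Lambda^a_L,\beta_c,H}-\log Z^f_{\Lambda^a_L,\beta_c,0}\le\tfrac12 H^2\,\mathrm{Var}^f_{\Lambda^a_L,0}(\sum_x\sigma_x)$, that is, the sub-Gaussian estimate
$$\langle e^{\lambda M^a}\rangle^f_{\Lambda^a_L,0}\le\exp\!\left[\tfrac{\lambda^2}{2}\,\mathrm{Var}(M^a)\right],\qquad\lambda\in\mathbb{R}.$$
It remains to see that $\mathrm{Var}(M^a)=a^{d+2}\sum_{x,y}\langle\sigma_x\sigma_y\rangle^f_{\Lambda^a_L,0}$ is bounded uniformly in $a$: the upper bound \eqref{eq:twopointfu} gives $\langle\sigma_x\sigma_y\rangle^f_{\Lambda^a_L,0}\le C a^{d-2}|x-y|^{-(d-2)}$, the off-diagonal sum then converges to $C\int_{\Lambda_L}\int_{\Lambda_L}|z-w|^{-(d-2)}\,dz\,dw<\infty$ by a Riemann-sum estimate, and the diagonal contributes $O(a^2)$. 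Consequently $\sup_a\langle e^{2hM^a}\rangle^f_{\Lambda^a_L,0}<\infty$, so $\{e^{hM^a}\}_a$ is uniformly integrable and, since moreover $(M^a)^2$ is then uniformly integrable, $\mathrm{Var}(M^a)\to\mathrm{Var}(M_{\Lambda_L})$.

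With uniform integrability in hand, I would pass to the limit in the tilting identity: joint weak convergence together with uniform integrability of $\{e^{hM^a}\}$ gives, for bounded continuous $F$,
$$\lim_{a\downarrow0}\langle F(\Phi^{a,h}_{\Lambda_L})\rangle^f_{\Lambda^a_L,H}=\frac{\langle F(\Phi_{\Lambda_L})\,e^{hM_{\Lambda_L}}\rangle}{\langle e^{hM_{\Lambda_L}}\rangle},$$
so $\Phi^{a,h}_{\Lambda_L}\Longrightarrow\Phi^h_{\Lambda_L}$, where $\Phi^h_{\Lambda_L}$ is the law of $\Phi_{\Lambda_L}$ reweighted by $e^{hM_{\Lambda_L}}/\langle e^{hM_{\Lambda_L}}\rangle$. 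This reweighting is a Cameron--Martin shift. Writing $Q(f,g):=\int_{\Lambda_L}\int_{\Lambda_L}f(z)G_{\Lambda_L}(z,w)g(w)\,dz\,dw$ and using $\Phi_{\Lambda_L}(f)+hM_{\Lambda_L}=\Phi_{\Lambda_L}(f+h\,1[\Lambda_L])$ together with the Gaussian Laplace functional $\langle e^{\Phi_{\Lambda_L}(g)}\rangle=\exp[\tfrac12 Q(g,g)]$, one computes
$$\langle e^{\Phi^h_{\Lambda_L}(f)}\rangle=\frac{\langle e^{\Phi_{\Lambda_L}(f+h\,1[\Lambda_L])}\rangle}{\langle e^{\Phi_{\Lambda_L}(h\,1[\Lambda_L])}\rangle}=\exp\!\left[\tfrac12 Q(f,f)+h\,Q(f,1[\Lambda_L])\right],$$
which is precisely the Laplace functional of a Gaussian field with the \emph{unchanged} covariance $G_{\Lambda_L}$ and mean $\langle\Phi^h_{\Lambda_L}(f)\rangle=h\,Q(f,1[\Lambda_L])=h\int_{\Lambda_L}\int_{\Lambda_L}f(z)G_{\Lambda_L}(z,w)\,dz\,dw$, as claimed. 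Finally, the Radon--Nikodym derivative is the limiting density $e^{hM_{\Lambda_L}}/\langle e^{hM_{\Lambda_L}}\rangle$; since $M_{\Lambda_L}$ is mean-zero Gaussian, $\langle e^{hM_{\Lambda_L}}\rangle=\exp[\tfrac{h^2}{2}\mathrm{Var}(M_{\Lambda_L})]$, so the density equals $\exp[hM_{\Lambda_L}-\tfrac{h^2}{2}\mathrm{Var}(M_{\Lambda_L})]$, the Wick exponential of $hM_{\Lambda_L}$. The only genuinely model-dependent ingredient is the uniform sub-Gaussian bound; the remainder is the general Cameron--Martin computation for tilting a Gaussian by the exponential of a linear functional.
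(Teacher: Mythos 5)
Your proof is correct, and it rests on the same two pillars as the paper's own argument: the exact exponential-tilting identity relating $\mathbb{P}^f_{\Lambda^a_L,H}$ to $\mathbb{P}^f_{\Lambda^a_L,0}$ (the paper's \eqref{eq:Radon-Nikodym}) and a GHS-based uniform exponential moment bound. The difference is in the packaging and the logical order. The paper proves the Gaussian limit first, by absorbing both the external field and the tilt parameter into test functions: it writes the moment generating function of $\Phi^{a,h}_{\Lambda_L}(f)$ as a ratio of zero-field exponential moments at $f_1=h+tf$ and $f_2=h$ (see \eqref{eq:hfield1}), applies the zero-field limit \eqref{eq:zerofieldlimit} --- whose exponential-moment input is cited from Proposition 3.5 of \cite{CGN15} rather than re-derived --- and reads off the shifted-mean Gaussian MGF; the Radon--Nikodym statement is then obtained separately by passing to the limit in \eqref{eq:Radon-Nikodym}. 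You reverse this order: you prove convergence of the tilted measures first, via a self-contained sub-Gaussian estimate $\langle e^{\lambda M^a}\rangle \le \exp[\lambda^2\mathrm{Var}(M^a)/2]$ from GHS concavity together with the uniform variance bound coming from \eqref{eq:twopointfu}, which makes the densities $e^{hM^a}$ uniformly integrable; you then identify the limit law by a Cameron--Martin computation, so the first part of the theorem becomes a corollary of the second. Your route buys two things: it treats the $\mathcal{H}^{-3}(\Lambda_L)$ topology head-on (testing against bounded continuous functionals, so tightness of the tilted fields comes for free from uniform integrability of the densities, whereas the paper's MGF computation strictly yields finite-dimensional convergence and leaves tightness for $h\neq 0$ implicit), and it makes the key exponential-moment estimate explicit instead of outsourcing it. The paper's route is shorter and isolates the Gaussian computation in a single MGF formula. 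Note that both arguments share the same implicit premise, namely that $1[\Lambda_L]$ is an admissible element of $\mathcal{H}^3(\Lambda_L)$ so that $\Phi\mapsto\Phi(1[\Lambda_L])$ is continuous; this is built into the statement of the theorem itself, so it is not a defect of your proof relative to the paper's.
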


\begin{remark}\label{rem:limitfield}
	By Theorem 1.3 of \cite{Sak07}, one has that for large $d$,
	\begin{equation}\label{eq:tpflimit}
		\lim_{a\downarrow 0} a^{-(d-2)}\langle\sigma_{x_a}\sigma_{y_a}\rangle_{a\mathbb{Z}^d}=C|x-y|^{-(d-2)},~\forall x\neq y\in\mathbb{R}^d,
	\end{equation}
where $x_a$ (resp., $y_a$) is a point in $a\mathbb{Z}^d$ that is closest to $x$ (resp., $y$). Combining this with the results in \cite{Aiz82}, we have that for large $d$,
\begin{equation}
	\Phi^{a,0}=a^{(d+2)/2}\sum_{x\in a\mathbb{Z}^d}\sigma_x\delta_x\Longrightarrow \text{ massless Gaussian free field}
\end{equation}
with covariance function $G_{\mathbb{Z}^d}(x,y)=C|x-y|^{-(d-2)}$.
Now our Theorem \ref{thm1} (more precisely, using \eqref{eq:tfin} below in its proof, together with \eqref{eq:twopointfu}) and Theorem~\ref{thm2} imply that for large $d$, as $L\uparrow\infty$, also
	\begin{equation}\label{eq:fflimit}
		\Phi^h_{\Lambda_L}(\cdot)-\langle \Phi^h_{\Lambda_L}(\cdot)\rangle\Longrightarrow \text{ massless Gaussian free field}
	\end{equation}
with the same covariance function $G_{\mathbb{Z}^d}$. It is expected that \eqref{eq:tpflimit}-\eqref{eq:fflimit} in fact hold for each $d>4$.
\end{remark}

The proof of Theorem \ref{thm1} relies on the random current representation of the
Ising model and its backbone representation, which will be introduced and discussed
briefly in Section~\ref{sec:rcm}. By the switching lemma, one can write the difference
of the two-point function in the infinite volume and in a finite domain as the
probability for existence of a long backbone connecting two ``close" points.
We then show that this probability is small by relating it to the infinite volume
correlation between two ``distant" points. Theorem \ref{thm2} follows from the
fact that the Radon-Nikodym derivative of the near-critical model with respect to
the critical model has the Gaussian form.

\subsection{Discussion of the main results}
As discussed in Remark \ref{rem:debate} above, Theorem \ref{thm1} confirms a numerical prediction on the behavior of the critical susceptibility in finite domains with free boundary conditions (see \cite{BKW12,LM14,LM16}).
Assuming the decay rate \eqref{eq:assumption} for the two-point function in the thermodynamic limit, Theorem \ref{thm1} implies that the critical susceptibility in a domain of linear size $n$ with free boundary conditions is of order $n^2$. The upper bound in \eqref{eq:assumption} is proved in \cite{FSS76,Sok82}; the lower bound is only proved for sufficiently large $d$ \cite{Sak07,Sak20} but is believed to hold for any $d \geq 4$.
	
Theorem \ref{thm2} may seem technical but it also contains an interesting and somewhat surprising message. Equipped with Theorem \ref{thm2}, one can consider the infinite-volume limit of the magnetization field $\Phi^h_{\Lambda_L}$ by sending $L \to \infty$. Let's assume that \eqref{eq:tpflimit} holds for some particular $d>4$ and that the scaling limit of the lattice magnetization in $\Lambda_L$ with free boundary conditions is a Gaussian field.
Then by Remark \ref{rem:limitfield}, the limit of $\Phi^h_{\Lambda_L}$ as $L \to \infty$ would have power law decay of correlations. On the contrary, as expressed in Conjecture~\ref{con1}, we expect that taking the near-critical scaling limit of the lattice magnetization on the whole space, $a{\mathbb Z}^d$, will lead to a continuum field with exponential decay of correlations. It appears that the infinite-volume limit and the scaling limit do not commute, and that interchanging them leads to very different results.
	
This should be contrasted with the situation in two dimensions where exponential decay already appears in finite volume, regardless of the boundary conditions, as shown in \cite{CJN20a}, and where interchanging the two limits leads to the same result, as shown in \cite{CGN16}. We note that the arguments in \cite{CGN16,CJN20a} make use of the RSW crossing probability bounds~\cite{CDCH16}, which are valid for two-dimensional critical systems but not in dimension higher than two. This explains why those arguments cannot be extended to higher dimensions, but the fact that exchanging the two limits leads to two very different results remains somewhat  mysterious.
	
What makes this a surprising fact is the heuristic view of a near-critical system as being characterized by a finite correlations length. According to this view, two parts of the system at distance much greater that the correlation length are essentially uncorrelated. In the case of a finite system of linear dimension much greater than the correlation length, the region around the origin should not feel the presence of the boundary. Hence, it seems natural to conclude that exponential decay should manifest itself already in finite systems, as is the case in two dimensions.
	
On the contrary, using Theorems \ref{thm1} and \ref{thm2} we see that, in dimension $d>4$, despite using the same scaling as in Conjecture \ref{con1}, a finite system with free boundary conditions is never really near-critical in the sense explained above. The exact mechanism by which the presence of a boundary somehow prevents the system from getting near criticality is unclear to us at the moment, and we think it is worth investigating.

\section{Random current representation and its properties}\label{sec:rcm}
In this section, we briefly introduce the random current representation for the Ising model without external field (i.e., $H=0$) and list some of its properties that will be used in the proof of Theorem \ref{thm1}. We refer to \cite{Aiz82,AF86,ADCS15} for more details about this representation.

\subsection{The random current representation}
Let $G=(V,E)$ be a finite subgraph of the nearest neighbor graph on $\mathbb{Z}^d$,
where $V$ is the set of vertices and $E$ is the set of edges. A \emph{current}
$\mathbf{n}$ on $G$ is a function from $E$ to $\mathbb{N}_0:=\mathbb{N}\cup \{0\}$.
A \emph{source} of
$\mathbf{n}=\{\mathbf{n}_{xy}: \{x,y\}\in E\}$ is a vertex $x\in V$ at which
$\sum_{y:\{x,y\}\in E}\mathbf{n}_{xy}$ is odd. The set of sources of $\mathbf{n}$
is denoted by $\partial\mathbf{n}$. Let $G_1=(V_1,E_1)$ be a subgraph of $G$. For any $x,y\in V_1$, we write $x\overset{\mathbf{n}}{\longleftrightarrow} y$ in $G_1$ for the event that there is a path $x=v_0,v_1,\dots,v_n=y$ such that $v_iv_{i+1}\in E_1$ and $\mathbf{n}_{v_iv_{i+1}}>0$ for each $0\leq i<n$. For every fixed $\mathbf{n}\in\mathbb{N}_0^E$, its weight is defined by
\begin{equation}
	w(\mathbf{n}):=\prod_{\{x,y\}\in G}\frac{\beta^{\mathbf{n}_{xy}}}{\mathbf{n}_{xy}!}.
\end{equation}
One important property of the random current representation is that, for any $A\subseteq V$,
\begin{equation}\label{eq:corrcr}
	\langle\sigma_A\rangle_{G,\beta}^f:=\big\langle\prod_{x\in A}\sigma_x\big\rangle_{G,\beta}^f=\frac{\sum_{\partial\mathbf{n}=A}w(\mathbf{n})}{\sum_{\partial\mathbf{n}=\emptyset}w(\mathbf{n})},
\end{equation}
where the sum in the numerator (resp., denominator) is over all $\mathbf{n}\in\mathbb{N}_0^E$
such that $\partial\mathbf{n}=A$ (resp., $\partial\mathbf{n}=\emptyset$). The following switching lemma turns out to be very useful in dealing with Ising correlations.
\begin{lemma}[Switching Lemma]\label{lem:switching}
	Suppose that $G_1=(V_1,E_1)$  is a subgraph of $G=(V,E)$. Then for any $x,y\in V_1$ and $A\subseteq V$, and any function $F:\mathbb{N}_0^E\rightarrow \mathbb{R}$, we have
	\begin{align}
		&\sum_{\substack{\mathbf{n}\in \mathbb{N}_0^E: \partial\mathbf{n}=A\\\mathbf{m}\in \mathbb{N}_0^{E_1}: \partial\mathbf{m}=\{x,y\}}}w(\mathbf{n})w(\mathbf{m})F(\mathbf{n}+\mathbf{m})\\
		&\qquad=\sum_{\substack{\mathbf{n}\in \mathbb{N}_0^E: \partial\mathbf{n}=A\Delta\{x,y\}\\\mathbf{m}\in \mathbb{N}_0^{E_1}: \partial\mathbf{m}=\emptyset}}w(\mathbf{n})w(\mathbf{m})F(\mathbf{n}+\mathbf{m})1\left[x\overset{\mathbf{n}+\mathbf{m}}{\longleftrightarrow}y \text{ in }G_1 \right],
	\end{align}
where $A\Delta B:=(A\setminus B)\cup (B\setminus A)$ is the symmetric difference and $1[\cdot]$ is the indicator function.
\end{lemma}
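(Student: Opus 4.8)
The plan is to prove the identity by fixing the total current and reducing everything to a purely combinatorial statement about even subgraphs of a multigraph.

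First I would reorganize both sides according to the value of the total current $\mathbf{N}:=\mathbf{n}+\mathbf{m}$, where $\mathbf{m}$ is extended by zero from $E_1$ to $E$. Three observations make this grouping work. The integrand satisfies $F(\mathbf{n}+\mathbf{m})=F(\mathbf{N})$, and the connectivity event $\{x\overset{\mathbf{n}+\mathbf{m}}{\longleftrightarrow}y\text{ in }G_1\}$ also depends only on $\mathbf{N}$. Since the source set is additive modulo $2$, i.e. $\partial(\mathbf{n}+\mathbf{m})=\partial\mathbf{n}\,\Delta\,\partial\mathbf{m}$, the left constraints ($\partial\mathbf{n}=A$, $\partial\mathbf{m}=\{x,y\}$) and the right constraints ($\partial\mathbf{n}=A\Delta\{x,y\}$, $\partial\mathbf{m}=\emptyset$) both force $\partial\mathbf{N}=A\Delta\{x,y\}$, so both sides range over the same family of total currents. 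Finally, for a fixed $\mathbf{N}$ the weight factorizes, with $\mathbf{n}=\mathbf{N}-\mathbf{m}$, as $w(\mathbf{n})w(\mathbf{m})=w(\mathbf{N})\prod_{e\in E}\binom{\mathbf{N}_e}{\mathbf{m}_e}$. Hence it suffices to establish, for each fixed $\mathbf{N}$ with $\partial\mathbf{N}=A\Delta\{x,y\}$, the identity
\[
\sum_{\substack{0\le \mathbf{m}\le\mathbf{N},\ \mathrm{supp}(\mathbf{m})\subseteq E_1\\ \partial\mathbf{m}=\{x,y\}}}\prod_{e\in E}\binom{\mathbf{N}_e}{\mathbf{m}_e}
= 1\!\left[x\overset{\mathbf{N}}{\longleftrightarrow}y\text{ in }G_1\right]\sum_{\substack{0\le \mathbf{m}\le\mathbf{N},\ \mathrm{supp}(\mathbf{m})\subseteq E_1\\ \partial\mathbf{m}=\emptyset}}\prod_{e\in E}\binom{\mathbf{N}_e}{\mathbf{m}_e},
\]
because summing this against the common factor $w(\mathbf{N})F(\mathbf{N})$ over all admissible $\mathbf{N}$ recovers the two sides of the lemma.

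Next I would interpret this combinatorially. Restricting $\mathbf{N}$ to $E_1$ defines a finite multigraph $\mathcal{M}$ on $V_1$ in which each edge $e$ has multiplicity $\mathbf{N}_e$; equivalently, view its edges as a set $\mathcal{E}$ of distinct strands, with $\mathbf{N}_e$ strands lying over $e$. A current $\mathbf{m}$ supported on $E_1$ with $0\le\mathbf{m}\le\mathbf{N}$ corresponds, via the factor $\prod_{e}\binom{\mathbf{N}_e}{\mathbf{m}_e}$, to a choice of subset $T\subseteq\mathcal{E}$ containing $\mathbf{m}_e$ of the strands over $e$, and $\partial\mathbf{m}$ is exactly the set of odd-degree vertices of $T$. (Edges in $E\setminus E_1$ contribute $\binom{\mathbf{N}_e}{0}=1$ and play no role, which is precisely why connectivity is measured inside $G_1$.) Thus the displayed identity says: the number of $T\subseteq\mathcal{E}$ whose odd-degree set is exactly $\{x,y\}$ equals the number of \emph{even} subsets $T$ (those with empty odd-degree set) if $x\leftrightarrow y$ in $\mathcal{M}$, and equals $0$ otherwise.

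Finally I would prove this in two cases. If $x$ and $y$ lie in different components of $\mathcal{M}$, then within the component containing $x$ every $T$ has an even number of odd-degree vertices (the handshake lemma), so no $T$ can have odd-degree set exactly $\{x,y\}$, and both sides vanish. If instead $x\leftrightarrow y$, fix once and for all a set of strands $\gamma\subseteq\mathcal{E}$ forming a path from $x$ to $y$ in $\mathcal{M}$, and consider the involution $T\mapsto T\,\Delta\,\gamma$ on subsets of $\mathcal{E}$. Taking symmetric difference with $\gamma$ flips the degree parity at a vertex $v$ by the parity of the number of $\gamma$-strands incident to $v$, which is odd exactly at the endpoints $x,y$ and even elsewhere; hence $T$ is even if and only if $T\,\Delta\,\gamma$ has odd-degree set $\{x,y\}$. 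This involution is a bijection between the two families, yielding equality of the counts. I expect the involution to be the clean, short core of the argument; the step requiring the most care is the first, bookkeeping paragraph—matching the source constraints correctly after grouping by $\mathbf{N}$ and verifying the weight factorization—since that is where the asymmetry between $\mathbf{n}\in\mathbb{N}_0^E$ and $\mathbf{m}\in\mathbb{N}_0^{E_1}$, and the role of $G_1$, must be tracked precisely.
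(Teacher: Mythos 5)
Your proof is correct and complete. The paper itself gives no argument for this lemma---it simply cites Lemma 2.2 of \cite{ADCS15}---and your proof (grouping by the total current $\mathbf{N}=\mathbf{n}+\mathbf{m}$, factorizing the paired weights into $w(\mathbf{N})\prod_e\binom{\mathbf{N}_e}{\mathbf{m}_e}$, passing to the multigraph of strands, and switching along a fixed path via the involution $T\mapsto T\,\Delta\,\gamma$) is precisely the standard argument given in that reference, so it matches the intended proof in all essentials.
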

\begin{proof}
	See Lemma 2.2 of \cite{ADCS15}.
\end{proof}

\subsection{The backbone representation of random currents}
Each unoriented nearest neighbor edge $\{x,y\}$ of $\mathbb{Z}^d$ corresponds to
two oriented edges: one from $x$ to $y$ (denoted by $(x,y)$) and the other
from $y$ to $x$ (denoted by $(y,x)$).  We fix an arbitrary order on all oriented
edges in $\mathbb{Z}^d$.  To each oriented edge $(x,y)$,  we associate a set of
\emph{canceled edges} consisting of $\{x,y\}$ and all unoriented edges $\{x,z\}$
such that $(x,z)$ appears earlier than $(x,y)$ in our fixed order. A sequence of
oriented edges is said to be \emph{consistent} if no edge of the sequence corresponds to an
unoriented edge cancelled by a previous edge. If $\omega$ is a consistent sequence
of oriented edges, we denote by $\tilde{\omega}$ the set of all unoriented edges its edges cancel.

The \emph{backbone of $\mathbf{n}$} with $\partial\mathbf{n}=\{x,y\}$, denoted by $\omega(\mathbf{n})$, is the (oriented) edge self-avoiding path from $x$ to $y$ passing only through edges $e$ with $\mathbf{n}_e$ odd which is minimal under the lexicographical order on paths induced by the fixed order. More generally, a \emph{backbone} is an edge self-avoiding path which is consistent. We only consider backbones between two points (called \emph{sources}) in this paper and the reader may refer to \cite{AF86} for the
more general definition. For each fixed backbone $\omega$ with sources
$\partial\omega=\{x,y\}$, we define
\begin{equation}\label{eq:rhodef}
	\rho_G(\omega):=\frac{\sum_{\partial\mathbf{n}=\{x,y\}}w(\mathbf{n})1[\omega(\mathbf{n})=\omega]}{\sum_{\partial\mathbf{n}=\emptyset}w(\mathbf{n})}.
\end{equation}

The following properties of the backbone representation,
based on results from \cite{AF86, ADCS15},
will be very important to the proof of Theorem \ref{thm1}.
\begin{proposition}\label{prop:backbone}
Let $\Lambda\subset \mathbb{Z}^d$ be finite. (We will also denote by $\Lambda$ the graph with vertex set $\Lambda$ and edges that are nearest-neighbor pairs of vertices in $\Lambda$.)
	\begin{enumerate}[(a)]
		\item $\langle\sigma_x\sigma_y\rangle_{\Lambda,\beta}^f=\sum_{\partial\omega=\{x,y\}}\rho_{\Lambda}(\omega)$.
		\item If $\omega_1\circ\omega_2$ denotes the concatenation of
	two backbones $\omega_1$ and $\omega_2$ and $\omega_1\circ\omega_2$ is consistent, then we have
		\begin{equation}
			\rho_{\Lambda}(\omega_1\circ\omega_2)=\rho_{\Lambda}(\omega_1)\rho_{\Lambda\setminus\tilde{\omega}_1}(\omega_2),
		\end{equation}
	    where $\Lambda\setminus\tilde{\omega}_1$ means that the coupling constant for each edge in $\tilde{\omega}_1$ is set to zero.
		\item For a fixed backbone $\omega$ with finitely many edges, we have that the limit
		\begin{equation}
			\lim_{\Lambda\uparrow\mathbb{Z}^d}\rho_{\Lambda}(\omega)
		\end{equation}
	exists; it will be denoted by $\rho_{\mathbb{Z}^d}(\omega)$.
	\end{enumerate}
\end{proposition}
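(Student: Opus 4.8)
I would prove the three parts in the stated order, with a single closed-form expression for $\rho_\Lambda(\omega)$ doing most of the work for (b) and (c). Part (a) is pure bookkeeping. Applying \eqref{eq:corrcr} with $A=\{x,y\}$, the numerator $\sum_{\partial\mathbf{n}=\{x,y\}}w(\mathbf{n})$ may be split according to the value of the backbone: any current with $\partial\mathbf{n}=\{x,y\}$ has odd-degree vertices exactly $\{x,y\}$, so the subgraph of edges carrying odd current joins $x$ to $y$ and the minimal edge-self-avoiding such path $\omega(\mathbf{n})$ is well defined with $\partial\omega(\mathbf{n})=\{x,y\}$. Hence the indicators $1[\omega(\mathbf{n})=\omega]$, as $\omega$ ranges over backbones with $\partial\omega=\{x,y\}$, partition the sum, and dividing by $\sum_{\partial\mathbf{n}=\emptyset}w(\mathbf{n})$ term by term turns \eqref{eq:rhodef} into the identity of (a).

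The engine for (b) and (c) is the identity
\[
\rho_\Lambda(\omega)=(\sinh\beta)^{|\omega|}\,(\cosh\beta)^{|\tilde\omega\setminus\omega|}\,\frac{Z^f_{\Lambda\setminus\tilde\omega}}{Z^f_\Lambda},
\]
where $Z^f_{\Lambda\setminus\tilde\omega}=\sum_{\partial\mathbf{n}=\emptyset}w(\mathbf{n})$ runs over currents supported off $\tilde\omega$. I would establish it by unwinding the greedy construction of the backbone, as in \cite{AF86,ADCS15}: the event $\omega(\mathbf{n})=\omega$ forces the current to be odd on each edge of $\omega$ and, at each vertex from which $\omega$ departs, even on every incident edge lying earlier in the fixed order than the departing $\omega$-edge (so that the greedy rule selects the $\omega$-edge and not an earlier one); these forced-even edges are exactly $\tilde\omega\setminus\omega$. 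Since $w$ is multiplicative over edges, the odd edges contribute $\sinh\beta$ each and the forced-even edges $\cosh\beta$ each; and because the $\omega$-edges already supply the required parity at $\{x,y\}$ while the forced-even edges are even, the current on the remaining edges $\Lambda\setminus\tilde\omega$ is unconstrained apart from being sourceless, contributing exactly $Z^f_{\Lambda\setminus\tilde\omega}$.

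Given the closed form, part (b) reduces to two elementary facts about consistent concatenation. First, the cancelled-edge sets add, $\widetilde{\omega_1\circ\omega_2}=\tilde\omega_1\cup\tilde\omega_2$ with $\omega_2\cap\tilde\omega_1=\emptyset$ (this is precisely where consistency of $\omega_1\circ\omega_2$ is used), so the $\sinh$ and $\cosh$ prefactors for $\omega_1\circ\omega_2$ are the products of those for $\omega_1$ and for $\omega_2$, the latter computed in $\Lambda\setminus\tilde\omega_1$. Second, the background partition functions telescope: the closed form gives $\rho_\Lambda(\omega_1)=(\text{prefactor}_1)\,Z^f_{\Lambda\setminus\tilde\omega_1}/Z^f_\Lambda$ and $\rho_{\Lambda\setminus\tilde\omega_1}(\omega_2)=(\text{prefactor}_2)\,Z^f_{\Lambda\setminus(\tilde\omega_1\cup\tilde\omega_2)}/Z^f_{\Lambda\setminus\tilde\omega_1}$, so in the product the factor $Z^f_{\Lambda\setminus\tilde\omega_1}$ cancels, leaving $(\text{prefactor}_1\cdot\text{prefactor}_2)\,Z^f_{\Lambda\setminus(\tilde\omega_1\cup\tilde\omega_2)}/Z^f_\Lambda=\rho_\Lambda(\omega_1\circ\omega_2)$.

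For part (c), the closed form leaves only the background ratio to control, since the prefactor depends on $\omega$ alone. I would show $Z^f_{\Lambda\setminus\tilde\omega}/Z^f_\Lambda$ converges as $\Lambda\uparrow\mathbb{Z}^d$ by deleting the finitely many edges of $\tilde\omega$ one at a time: removing an edge $\{a,b\}$ multiplies $Z^f$ by $\langle e^{-\beta\sigma_a\sigma_b}\rangle^f=\cosh\beta-\sinh\beta\,\langle\sigma_a\sigma_b\rangle^f$, so the ratio is a fixed polynomial in free-boundary two-point functions on $\Lambda$ and on finitely many edge-deleted subgraphs. Each such two-point function converges to its infinite-volume value by the monotone convergence of free-boundary correlations via the GKS inequalities \cite{Gri67,KS68}, so the ratio converges and $\rho_\Lambda(\omega)\to\rho_{\mathbb{Z}^d}(\omega)$. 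The one genuinely substantive step is the closed form above, i.e. verifying that the greedy backbone constraint decouples cleanly into independent edgewise conditions plus a sourceless background; this is the crux, and the combinatorics of the cancellation sets (ensuring $\widetilde{\omega_1\circ\omega_2}=\tilde\omega_1\cup\tilde\omega_2$ under consistency) is the place I expect to have to argue most carefully.
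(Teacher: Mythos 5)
Your proposal is correct, but it is structured quite differently from the paper's proof, which is essentially a set of citations: part (a) is quoted as (4.2) of \cite{AF86}, part (b) as (4.7) of \cite{AF86}, and part (c) is obtained by quoting (4.5) of \cite{AF86}, namely
$\rho_{\Lambda}(\omega)=\prod_{e\in\omega}\tanh(\beta)\,\bigl(\sum_{\partial\mathbf{n}=\emptyset}w(\mathbf{n})1[\mathbf{n}_e \text{ even for each } e\in\tilde{\omega}]\bigr)\big/\bigl(\sum_{\partial\mathbf{n}=\emptyset}w(\mathbf{n})\bigr)$,
and then invoking Theorem 2.3 of \cite{ADCS15} for the convergence of that ratio as $\Lambda\uparrow\mathbb{Z}^d$. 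Your closed form is an equivalent rewriting of that quoted identity (summing the even values on each edge of $\tilde\omega$ produces a factor $\cosh\beta$ per edge, and $\tanh\beta\cosh\beta=\sinh\beta$), so for (a) and (b) you are proving, rather than citing, the same facts from \cite{AF86}; the derivation you sketch (parity constraints odd on $\omega$, even on $\tilde\omega\setminus\omega$, unconstrained background) is indeed the standard one, and the handshake-lemma argument you give for (a) is what guarantees the backbone map is well defined. Where you genuinely diverge is part (c): instead of citing \cite{ADCS15}, you delete the finitely many edges of $\tilde\omega$ one at a time via $Z^f_{\Lambda\setminus\{a,b\}}/Z^f_{\Lambda}=\cosh\beta-\sinh\beta\,\langle\sigma_a\sigma_b\rangle^f_{\Lambda}$ and conclude by GKS monotonicity in $\Lambda$ of free-boundary two-point functions on edge-deleted graphs. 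This alternative is correct, elementary, and self-contained; it also gives independence of the exhausting sequence for free and works for all $\beta\geq 0$, as the proposition requires (and it uses only the same GKS monotonicity that the paper itself invokes later in the proof of Theorem \ref{thm1}). The price is the bookkeeping you rightly flag in (b): $\tilde\omega_1$ and $\tilde\omega_2$ need not be disjoint, so the prefactor of $\rho_{\Lambda\setminus\tilde\omega_1}(\omega_2)$ must be computed with the couplings of the deleted graph (edges of $\tilde\omega_2\cap\tilde\omega_1$ contribute $\cosh(0)=1$, and consistency, i.e.\ $\omega_2\cap\tilde\omega_1=\emptyset$, rules out any $\sinh(0)=0$ factor); with that reading, the exponent identity $|\widetilde{\omega_1\circ\omega_2}\setminus(\omega_1\circ\omega_2)|=|\tilde\omega_1\setminus\omega_1|+|(\tilde\omega_2\setminus\omega_2)\setminus\tilde\omega_1|$, which follows from $\omega_2\cap\tilde\omega_1=\emptyset$ and $\omega_1\subseteq\tilde\omega_1$, makes your telescoping product match $\rho_\Lambda(\omega_1\circ\omega_2)$ exactly, so the argument closes.
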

\begin{proof}
	Part (a) follows from (4.2) of \cite{AF86} and (b) follows from (4.7) of \cite{AF86}. From (4.5) of~\cite{AF86}, for a fixed backbone $\omega$, we have
	\begin{align}
		\rho_{\Lambda}(\omega)&=\prod_{e\in\omega}\tanh(\beta)\frac{\sum_{\partial\mathbf{n}=\emptyset}w(\mathbf{n})1[\mathbf{n}_e\text{ is even for each } e\in\tilde{\omega}]}{Z^f_{\Lambda,\beta}}\\
		&=\prod_{e\in\omega}\tanh(\beta)\frac{\sum_{\partial\mathbf{n}=\emptyset}w(\mathbf{n})1[\mathbf{n}_e\text{ is even for each } e\in\tilde{\omega}]}{\sum_{\partial\mathbf{n}=\emptyset}w(\mathbf{n})}.\label{eq:rhofull}
	\end{align}
Here the unoriented edge $e\in\omega$ means that one of the two oriented edges corresponding
to $e$ is in $\omega$.  The fraction in \eqref{eq:rhofull} has a limit as
$\Lambda\uparrow\mathbb{Z}^d$ by Theorem 2.3 of \cite{ADCS15}. This completes the
proof of part (c) in the proposition.
\end{proof}

\section{Proof of the main results}
In this section, we prove Theorems \ref{thm1} and \ref{thm2}. We first prove a comparison
result for correlations which will be a key step in the proof of Theorem \ref{thm1}.
Here is some notation. For $x\in\mathbb{Z}^d$, let $\bar{x}$ be the reflection of $x$ with
respect to the plane $x_1=0$, i.e.,
\begin{equation}
	\bar{x}=(-x_1,x_2,\dots,x_d) \text{ for each } x=(x_1,x_2,\dots,x_d)\in\mathbb{Z}^d.
\end{equation}
 If $A$ is a family of edges in $\mathbb{Z}^d$, let $\bar{A}$ be the reflection of $A$ with respect to the plane $x_1=0$, i.e.,
\begin{equation}
	\bar{A}=\{\{\bar{v},\bar{w}\}: \{v,w\}\in A\}.
\end{equation}

\begin{proposition}\label{prop:comp}
	For $K,L\in\mathbb{N}$, let $D:=[-K,K]\times[-L,L]^{d-1}$. Let $A$ be a  collection of edges in $D$ whose endpoints have nonnegative first coordinate, that is,
	\begin{equation}
		\{v,w\}\in A,  \text{ iff } v, w\in D \text{ with } v_1\geq 0 \text{ and } w_1\geq0.
	\end{equation}
Then for any $u\in D$ with $u_1=0$ and any $y\in D$ with $y_1\geq 0$, we have
	\begin{equation}
		\langle \sigma_u\sigma_y\rangle_{D\setminus \bar{A},\beta}^f\geq \langle \sigma_u\sigma_{\bar{y}}\rangle_{D\setminus \bar{A},\beta}^f,
	\end{equation}
where $\langle\cdot\rangle_{D\setminus \bar{A},\beta}^f$ means that the coupling constant for each edge in $\bar{A}$ is set to $0$ while the coupling constants for all other edges in $D$ are still $1$,
or equivalently we consider the graph with vertex set $D$ and edges between nearest-neighbor vertices but with the edges in $\bar{A}$ removed. (see Figure \ref{fig:cor}).
\end{proposition}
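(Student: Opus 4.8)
The plan is to strip away the asymmetry of the graph $D\setminus\bar A$ and reduce the statement to a reflection‑monotonicity inequality on a genuinely reflection‑symmetric ferromagnet, where Griffiths/Ginibre‑type inequalities apply. First note that $\bar u=u$, since $u_1=0$. Let $G_0$ be the graph on $D$ whose edge set is obtained by deleting $A\cup\bar A$ from all nearest‑neighbor pairs of $D$; because $D$ and $A\cup\bar A$ are both invariant under the reflection $x\mapsto\bar x$, the graph $G_0$ is reflection‑symmetric. Set $B:=A\setminus\bar A$, a family of edges whose endpoints all lie in the closed right half $D^{+}:=\{x\in D:x_1\ge 0\}$ (no edge of $B$ lies in the plane $\{x_1=0\}$). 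A direct check shows $D\setminus\bar A=G_0\cup B$, the plane edges in $A\cap\bar A$ being absent in both descriptions. Thus the model in the Proposition is the symmetric model on $G_0$ perturbed by the one‑sided ferromagnetic bonds $B$.

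Next I would expand out this one‑sided perturbation. Writing $X:=\sigma_u(\sigma_y-\sigma_{\bar y})$ and using that the Boltzmann weight factorizes over edges,
\[
\langle X\rangle^{f}_{D\setminus\bar A,\beta}
=\frac{\big\langle X\,\exp\!\big(\beta\sum_{\{a,b\}\in B}\sigma_a\sigma_b\big)\big\rangle^{f}_{G_0,\beta}}
{\big\langle \exp\!\big(\beta\sum_{\{a,b\}\in B}\sigma_a\sigma_b\big)\big\rangle^{f}_{G_0,\beta}},
\]
with a positive denominator. Expanding the exponential into its Taylor series (all coefficients nonnegative, as $\beta\ge 0$), every resulting monomial equals $\sigma_S:=\prod_{x\in S}\sigma_x$ for some $S\subseteq D^{+}$. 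Hence it suffices to prove, for every $S\subseteq D^{+}$, the reflection inequality
\begin{equation*}
\langle\sigma_u\,\sigma_S\,\sigma_y\rangle^{f}_{G_0,\beta}\ \geq\ \langle\sigma_u\,\sigma_S\,\sigma_{\bar y}\rangle^{f}_{G_0,\beta},
\tag{$\star$}
\end{equation*}
equivalently $\langle\sigma_T(\sigma_y-\sigma_{\bar y})\rangle^{f}_{G_0,\beta}\ge 0$ with $T:=\{u\}\,\Delta\,S\subseteq D^{+}$. Summing $(\star)$ against the nonnegative Taylor coefficients then yields $\langle X\rangle^{f}_{D\setminus\bar A}\ge 0$, which is the claim. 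Note that this reduction uses only $\beta\ge 0$, consistent with the remark that Proposition~\ref{prop:comp} holds for every $\beta$.

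It remains to prove $(\star)$, and I expect this to be the main obstacle. Inequality $(\star)$ is a reflection‑monotonicity statement on the reflection‑symmetric ferromagnet $G_0$: the spin at $y$, on the same side as the observable $\sigma_T$, should correlate at least as strongly as its mirror image at $\bar y$ on the far side. I would establish it by the duplicated‑variable (Ginibre) method: pass to two independent copies on $G_0$, use the measure‑preserving swap‑reflection $(\sigma,\sigma')\mapsto(\sigma'\circ R,\ \sigma\circ R)$ (where $R$ is reflection through the plane of sites $\{x_1=0\}$, a coupling‑preserving automorphism of $G_0$), and express the differences $\sigma_y-\sigma_{\bar y}$ and $\sigma_T-\sigma_{\bar T}$ through the associated difference variables, whose correlations carry a definite sign by Griffiths' second inequality \cite{Gri67,KS68} in the transformed (still ferromagnetic) coupling structure. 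This is exactly the Messager–Miracle-Solé type reflection inequality. The reason duplication is needed—and the heart of the difficulty—is that the naive shortcut of flipping all spins in the open left half to "fold" the system onto $D^{+}$ fails: the bonds of $G_0$ crossing $\{x_1=0\}$ become antiferromagnetic under such a flip, so the desired monotonicity cannot be read off from GKS directly, and the symmetric duplication is what restores a ferromagnetic structure in which the sign of the difference correlations is forced.
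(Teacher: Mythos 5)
Your reduction step is sound: the identification $D\setminus\bar{A}=G_0\cup B$ with $G_0$ reflection-symmetric and $B=A\setminus\bar{A}$ one-sided, followed by Taylor-expanding $\exp\bigl(\beta\sum_{\{a,b\}\in B}\sigma_a\sigma_b\bigr)$ into monomials $\sigma_S$, $S\subseteq D^+$, with nonnegative coefficients, correctly reduces the proposition to the inequality $(\star)$ on the symmetric graph $G_0$, for every $\beta\geq0$. Moreover $(\star)$ is a true statement --- it is exactly the Messager--Miracle-Sol\'e reflection inequality. The gap is in your proof of $(\star)$, which you yourself identify as the heart of the matter, and it is a real one: the duplication you describe, namely two \emph{independent} copies $(\sigma,\sigma')$ on $G_0$ together with the swap-reflection $(\sigma,\sigma')\mapsto(\sigma'\circ R,\sigma\circ R)$, does not deliver the inequality. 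The invariant and anti-invariant variables of that involution are $q_x=\sigma_x+\sigma'_{\bar{x}}$ and $p_x=\sigma_x-\sigma'_{\bar{x}}$, and the doubled Hamiltonian is indeed ferromagnetic in $(q,p)$; but the quantity to be signed, $\langle\sigma_T(\sigma_y-\sigma_{\bar{y}})\rangle_{G_0,\beta}^f$, lives entirely in the first copy, and rewriting it via $\sigma_y=(q_y+p_y)/2$ and $\sigma_{\bar{y}}=(q_{\bar{y}}+p_{\bar{y}})/2$ leaves you with differences of the form $\langle q_Bp_Cq_y\rangle-\langle q_Bp_Cq_{\bar{y}}\rangle$, in which $y$ and $\bar{y}$ are simply different sites of the $(q,p)$ ferromagnet. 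Ginibre/GKS makes each term nonnegative but gives no comparison between them: the reflection asymmetry has been transported into the new system, not resolved. The same obstruction appears if you first use symmetry to write the difference as $\langle(\sigma_T-\sigma_{\bar{T}})\sigma_y\rangle$.

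The duplication that works --- and it is what the paper does, following Theorem 1 of \cite{MMS77} --- folds the \emph{single} system across the reflection plane: for $x_1\geq0$ set $s_x=(\sigma_x+\sigma_{\bar{x}})/2$ and $t_x=(\sigma_x-\sigma_{\bar{x}})/2$, so the two ``copies'' are the two (coupled, not independent) halves of one configuration. Pairing each off-plane edge of $G_0$ with its mirror image gives couplings $2(s_xs_y+t_xt_y)$, plus $s_xs_y$ on the plane, so the folded Hamiltonian is ferromagnetic in $(s,t)$; Ginibre's inequalities \cite{Gin70} then yield $\langle s_Bt_C\rangle\geq0$ for all $B,C$, and $(\star)$ follows by expanding $\sigma_T(\sigma_y-\sigma_{\bar{y}})=2\prod_{x\in T}(s_x+t_x)\,t_y$. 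Once this folding is available, your Taylor-expansion detour is unnecessary: applied directly to $D\setminus\bar{A}$, each one-sided bond in $A\setminus\bar{A}$ contributes $(s_x+t_x)(s_y+t_y)$, whose cross terms $s_xt_y$, $s_yt_x$ still carry nonnegative coefficients, so the whole asymmetric system is already ferromagnetic in $(s,t)$ and the single inequality $\langle s_ut_y\rangle\geq0$, with $s_u=\sigma_u$ because $\bar{u}=u$, is precisely the proposition. That one-step argument is the paper's entire proof.
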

\begin{figure}
	\begin{center}
		\includegraphics[width=0.8\textwidth]{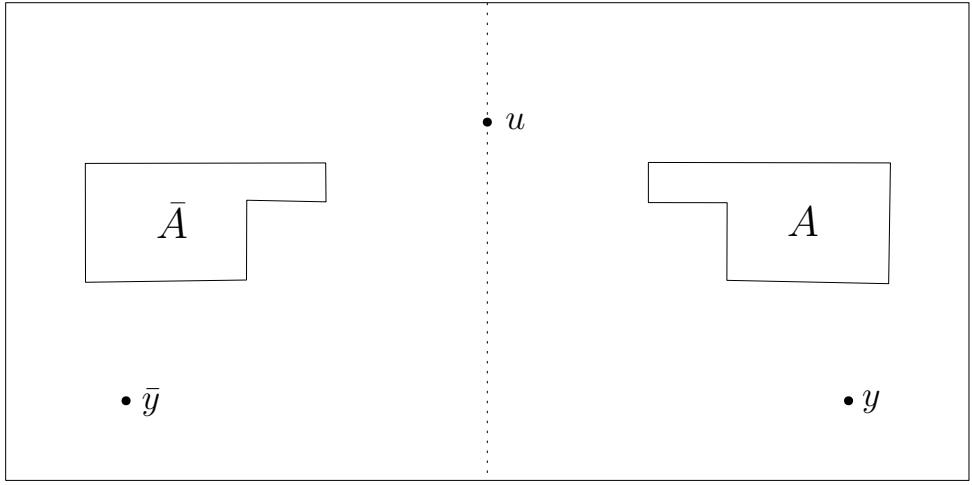}
		\caption{An illustration of the notations in Proposition \ref{prop:comp}.}\label{fig:cor}
	\end{center}
\end{figure}
\begin{proof}
	Our argument is inspired by the proof of Theorem 1 of \cite{MMS77}.
Let $D_+$ be the set of points $x\in D$ with $x_1\geq0$.
We will slightly abuse notation below by sometimes using $D$ (or $D_+$) to denote the set of
edges both of whose vertices are in $D$ (or $D_+$).
For each $x\in D_+$, we define
	\begin{equation}
		s_x=(\sigma_x+\sigma_{\bar{x}})/2,~	t_x=(\sigma_x-\sigma_{\bar{x}})/2.
	\end{equation}
	Then the Ising Hamiltonian in $D\setminus\bar{A}$ can be written as
	\begin{align}
		-H_{D\setminus\bar{A}}(\sigma)&=\sum_{\{x,y\}\in D\setminus\bar{A}}\sigma_x\sigma_y\\
		&=\sum_{\substack{\{x,y\}\in D_+\setminus A\\\{x_1,y_1\}\neq \{0,0\}}}(\sigma_x\sigma_y+\sigma_{\bar{x}}\sigma_{\bar{y}})+\sum_{\substack{\{x,y\}\in D\setminus A\\x_1=y_1=0}}\sigma_x\sigma_y+\sum_{\{x,y\}\in A\setminus \bar{A}}\sigma_x\sigma_y\\
		&=2\sum_{\substack{\{x,y\}\in D_+\setminus A\\ \{x_1,y_1\}\neq \{0,0\}}}(s_xs_y+t_xt_y)+\sum_{\substack{\{x,y\}\in D\setminus A\\x_1=y_1=0}}(s_x+t_x)(s_y+t_y)\\
		&\quad +\sum_{\{x,y\}\in A\setminus \bar{A}}(s_x+t_x)(s_y+t_y).
	\end{align}
Therefore, $H_{D\setminus\bar{A}}(\sigma)$ can be viewed as the Hamiltonian of a
spin system $\{(s_x,t_x):x\in D_+\}$ (with some $s_xt_y$ and $s_yt_x$ terms). It is clear
that this spin system is ferromagnetic and invariant under spin flipping, so by
the GKS inequalities \cite{Gri67,KS68,Gin70}, we have
	\begin{equation}
		\langle s_Bt_C \rangle_{D_+}\geq 0, \text{ for any } B,C\subseteq D_+.
	\end{equation}
	Setting $B=u$ and $C=y$ in the last displayed equation, we complete the proof the proposition.
\end{proof}
We are now ready to prove Theorem \ref{thm1}.
\begin{proof}[Proof of Theorem \ref{thm1}]
	The  upper bound in \eqref{eq:mainresult} follows from the GKS inequalities and our assumption \eqref{eq:assumption}. So it remains to prove the lower bound in \eqref{eq:mainresult}.
	For any finite $\Lambda\subseteq\mathbb{Z}^d$ such that $\Lambda_{4Mn}\subseteq\Lambda$, we have by the random current representation (see \eqref{eq:corrcr}) that
	\begin{align}
		\langle\sigma_x\sigma_y\rangle_{\Lambda}^f-\langle\sigma_x\sigma_y\rangle_{\Lambda_{Mn}}^f&=\frac{\sum_{\partial\mathbf{n}=\{x,y\}}w(\mathbf{n})}{\sum_{\partial\mathbf{n}=\emptyset}w(\mathbf{n})}
		-\frac{\sum_{\partial\mathbf{m}=\{x,y\}}w(\mathbf{m})}{\sum_{\partial\mathbf{m}=\emptyset}w(\mathbf{m})}\\
		&=\frac{\sum_{\partial\mathbf{n}=\{x,y\},\partial\mathbf{m}=\emptyset}w(\mathbf{n})w(\mathbf{m})-\sum_{\partial\mathbf{n}=\emptyset,\partial\mathbf{m}=\{x,y\}}w(\mathbf{n})w(\mathbf{m})}{\sum_{\partial\mathbf{n}=\emptyset,\partial\mathbf{m}=\emptyset}w(\mathbf{n})w(\mathbf{m})},\label{eq:cordif}
	\end{align}
where $\sum$ above and below represents a sum over some
$\mathbf{n}\in\mathbb{N}_0^{\Lambda}$ or some $\mathbf{m}\in\mathbb{N}_0^{\Lambda_{Mn}}$
or a double sum over both. By Lemma~\ref{lem:switching}, we have
	\begin{equation}\label{eq:switch}
		\sum_{\partial\mathbf{n}=\emptyset,\partial\mathbf{m}=\{x,y\}}w(\mathbf{n})w(\mathbf{m})=\sum_{\partial\mathbf{n}=\{x,y\},\partial\mathbf{m}=\emptyset}w(\mathbf{n})w(\mathbf{m})1[x\overset{\mathbf{n}+\mathbf{m}}{\longleftrightarrow} y \text{ in }\Lambda_{Mn}] .
	\end{equation}
Plugging \eqref{eq:switch} into \eqref{eq:cordif}, we get
	\begin{equation}
		\langle\sigma_x\sigma_y\rangle_{\Lambda}^f-\langle\sigma_x\sigma_y\rangle_{\Lambda_{Mn}}^f=\frac{\sum_{\partial\mathbf{n}=\{x,y\},\partial\mathbf{m}=\emptyset}w(\mathbf{n})w(\mathbf{m})1[x\overset{\mathbf{n}+\mathbf{m}}{\centernot\longleftrightarrow y} \text{ in }\Lambda_{Mn}]}{\sum_{\partial\mathbf{n}=\emptyset,\partial\mathbf{m}=\emptyset}w(\mathbf{n})w(\mathbf{m})}.
	\end{equation}
	Since $\{x\overset{\mathbf{n}+\mathbf{m}}{\centernot\longleftrightarrow y} \text{ in }\Lambda_{Mn}\}\subseteq\{x\overset{\mathbf{n}}{\centernot\longleftrightarrow} y \text{ in }\Lambda_{Mn}\}$, we have
	\begin{equation}\label{eq:cordif1}
		\langle\sigma_x\sigma_y\rangle_{\Lambda}^f-\langle\sigma_x\sigma_y\rangle_{\Lambda_{Mn}}^f\leq\frac{\sum_{\partial\mathbf{n}=\{x,y\}}w(\mathbf{n})1[x\overset{\mathbf{n}}{\centernot\longleftrightarrow} y \text{ in }\Lambda_{Mn}])}{\sum_{\partial\mathbf{n}=\emptyset}w(\mathbf{n})}.
	\end{equation}
	
	Using the backbone representation and the $\rho$ function defined in \eqref{eq:rhodef}, we have
	\begin{equation}\label{eq:bb1}
		\frac{\sum_{\partial\mathbf{n}=\{x,y\}}w(\mathbf{n})1[x\overset{\mathbf{n}}{\centernot\longleftrightarrow} y \text{ in }\Lambda_{Mn}])}{\sum_{\partial\mathbf{n}=\emptyset}w(\mathbf{n})}=\sum_{\substack{\partial\omega=\{x,y\}\\\omega\cap\Lambda_{Mn}^c\neq\emptyset}}\rho_{\Lambda}(\omega),
	\end{equation}
where $\omega\cap\Lambda_{Mn}^c\neq\emptyset$ means that $\omega$ should use at least one edge in the complement of $\Lambda_{Mn}$.

Let $\partial\Lambda_{Mn}$ be the boundary of $\Lambda_{Mn}$, i.e.,
\begin{equation}
	\partial\Lambda_{Mn}:=\{z\in\Lambda_{Mn}: \exists\text{ nearest-neighbor edge }\{z,w\} \text{ such that } w\notin\Lambda_{Mn} \}.
\end{equation}

For each $\omega=\{v_i:0\leq i\leq I\}$ satisfying $\omega\cap\Lambda_{Mn}^c\neq\emptyset$, let $\tau$ be the first $i$ such that $v_i\in\partial\Lambda_{Mn}$, and write $\omega_1$ for the walk $\{v_i:0\leq i\leq \tau\}$ and $\omega_2$ for the rest of the walk $\omega$ (i.e. $\{v_i:\tau\leq i\leq I\}$); then it is clear that $\omega_1$ is a backbone from $x$ to some $u\in\partial\Lambda_{Mn}$ which does not contain any other vertex in $\partial\Lambda_{Mn}$, and $\omega_2$ is a backbone from $u$ to $y$ satisfying $\omega_2\cap\tilde{\omega}_1=\emptyset$. It is easy to see that such a decomposition map $\omega\mapsto (\omega_1,\omega_2)$ is bijective. Therefore, we can write
\begin{equation}
	\sum_{\substack{\partial\omega=\{x,y\}\\\omega\cap\Lambda_{Mn}^c\neq\emptyset}}\rho_{\Lambda}(\omega)=\sum_{u\in\partial\Lambda_{Mn}}\sideset{}{'}\sum_{\partial\omega_1=\{x,u\}}\sum_{\substack{\partial\omega_2=\{u,y\}\\\omega_2\cap\tilde{\omega}_1=\emptyset}}\rho_{\Lambda}(\omega_1\circ\omega_2),
\end{equation}
 where $\sideset{}{'}\sum_{\partial\omega_1=\{x,u\}}$ denotes the sum over all backbones $\omega_1$ from $x$ to $u$ which first hit $\partial\Lambda_{Mn}$ at $u$.

Now properties (b) and (a) in Proposition \ref{prop:backbone} applied to the last
displayed equation give
\begin{align}
	\sum_{\substack{\partial\omega=\{x,y\}\\\omega\cap\Lambda_{Mn}^c\neq\emptyset}}\rho_{\Lambda}(\omega)&=\sum_{u\in\partial\Lambda_{Mn}}\sideset{}{'}\sum_{\partial\omega_1=\{x,u\}}\sum_{\substack{\partial\omega_2=\{u,y\}\\\omega_2\cap\tilde{\omega}_1=\emptyset}}\rho_{\Lambda}(\omega_1)\rho_{\Lambda\setminus\tilde{\omega}_1}(\omega_2)\\
	&=\sum_{u\in\partial\Lambda_{Mn}}\sideset{}{'}\sum_{\partial\omega_1=\{x,u\}}\rho_{\Lambda}(\omega_1)\langle\sigma_u\sigma_y\rangle_{\Lambda\setminus\tilde{\omega}_1}^f .
\end{align}
If we fix $M$ and $n$ and let $\Lambda\uparrow\mathbb{Z}^d$, we obtain by property (c) of
Proposition \ref{prop:backbone} and the GKS inequalities (to get the monotonicity of $\langle\cdot\rangle_{\Lambda\setminus\tilde{\omega}_1}^f$ in $\Lambda$) that
\begin{equation}\label{eq:sumrholimit}
	\lim_{\Lambda\uparrow\mathbb{Z}^d}\sum_{\substack{\partial\omega=\{x,y\}\\\omega\cap\Lambda_{Mn}^c\neq\emptyset}}\rho_{\Lambda}(\omega)=\sum_{u\in\partial\Lambda_{Mn}}\sideset{}{'}\sum_{\partial\omega_1=\{x,u\}}\rho_{\mathbb{Z}^d}(\omega_1)\langle\sigma_u\sigma_y\rangle_{\mathbb{Z}^d\setminus\tilde{\omega}_1}.
\end{equation}

Note that $\partial\Lambda_{Mn}$ has $2d$ faces.
We order the faces of $\partial\Lambda_{Mn}$ and denote them by $F_i, 1\leq i\leq 2d$, and define $y^i, 1\leq i\leq 2d$, to be the image of $y$ under the reflection with
respect to the plane containing $F_i$. Without loss of
generality, we can assume that $y^1$ is the image of $y$ under the
reflection with respect to the plane $\{z\in\mathbb{R}^d:z_1=Mn\}$. We choose a
sequence of domains $D_j:=[-j+Mn,j+Mn]\times[-j,j]^{d-1}$.
Then Proposition \ref{prop:comp} implies that, for all large $j$ and all $u$ such that $u_1=Mn$,
\begin{equation}
	\langle\sigma_u\sigma_y\rangle_{D_j\setminus\tilde{\omega}_1}^f\leq\langle\sigma_u\sigma_{y^1}\rangle_{D_j\setminus\tilde{\omega}_1}^f.
\end{equation}
Sending $j$ to infinity in the last displayed inequality, we get
\begin{equation}\label{eq:corcomp}
	\langle\sigma_u\sigma_y\rangle_{\mathbb{Z}^d\setminus\tilde{\omega}_1}\leq\langle\sigma_u\sigma_{y^1}\rangle_{\mathbb{Z}^d\setminus\tilde{\omega}_1}.
\end{equation}

Note that in \eqref{eq:sumrholimit} and \eqref{eq:corcomp}, we have implicitly used
the fact that the limit $\langle\cdot\rangle_{\mathbb{Z}^d\setminus\tilde{\omega}_1}$  is independent of the sequence. Depending on which face contains $u$,
there are actually $2d$ inequalities similar to \eqref{eq:corcomp}.
Applying all these inequalities to \eqref{eq:sumrholimit}, we have
\begin{align}
\lim_{\Lambda\uparrow\mathbb{Z}^d}\sum_{\substack{\partial\omega=\{x,y\}\\\omega\cap\Lambda_{Mn}^c\neq\emptyset}}\rho_{\Lambda}(\omega)&\leq\sum_{i=1}^{2d}\sum_{u\in F_i}\sideset{}{'}\sum_{\partial\omega_1=\{x,u\}}\rho_{\mathbb{Z}^d}(\omega_1)\langle\sigma_u\sigma_{y^i}\rangle_{\mathbb{Z}^d\setminus\tilde{\omega}_1}\\
&=\lim_{\Lambda\uparrow\mathbb{Z}^d}\sum_{i=1}^{2d}\sum_{u\in F_i}\sideset{}{'}\sum_{\partial\omega_1=\{x,u\}}\rho_{\Lambda}(\omega_1)\langle\sigma_u\sigma_{y^i}\rangle_{\Lambda\setminus\tilde{\omega}_1}^f\\
&=\lim_{\Lambda\uparrow\mathbb{Z}^d}\sum_{i=1}^{2d}\sum_{u\in F_i}\sideset{}{'}\sum_{\partial\omega_1=\{x,u\}}\rho_{\Lambda}(\omega_1)\sum_{\substack{\partial\omega_2=\{u,y^i\}\\\omega_2\cap\tilde{\omega}_1=\emptyset}}\rho_{\Lambda\setminus\tilde{\omega}_1}(\omega_2)\label{eq:midstep1}\\
&=\lim_{\Lambda\uparrow\mathbb{Z}^d}\sum_{i=1}^{2d}\sum_{u\in F_i}\sideset{}{'}\sum_{\partial\omega_1=\{x,u\}}\sum_{\substack{\partial\omega_2=\{u,y^i\}\\\omega_2\cap\tilde{\omega}_1=\emptyset}}\rho_{\Lambda}(\omega_1\circ\omega_2)\label{eq:midstep2}\\
&\leq\lim_{\Lambda\uparrow\mathbb{Z}^d}\sum_{i=1}^{2d}\sum_{u\in \partial\Lambda_{Mn}}\sideset{}{'}\sum_{\partial\omega_1=\{x,u\}}\sum_{\substack{\partial\omega_2=\{u,y^i\}\\\omega_2\cap\tilde{\omega}_1=\emptyset}}\rho_{\Lambda}(\omega_1\circ\omega_2)\label{eq:midstep3}\\
&=\lim_{\Lambda\uparrow\mathbb{Z}^d}\sum_{i=1}^{2d}\sum_{\partial\omega=\{x,y^i\}}\rho_{\Lambda}(\omega)\\
&=\lim_{\Lambda\uparrow\mathbb{Z}^d}\sum_{i=1}^{2d}\langle\sigma_x\sigma_{y^i}\rangle_{\Lambda}^f=\sum_{i=1}^{2d}\langle\sigma_x\sigma_{y^i}\rangle_{\mathbb{Z}^d},\label{eq:sumrholast}
\end{align}
where we have applied Proposition \ref{prop:backbone} in \eqref{eq:midstep1} and \eqref{eq:midstep2} and in the last line, and where \eqref{eq:midstep3} follows trivially from the fact that $\rho_{\Lambda}(\omega_1\circ\omega_2)\geq0$. Combining \eqref{eq:sumrholast}, \eqref{eq:bb1} and \eqref{eq:cordif1} and letting $\Lambda\uparrow\mathbb{Z}^d$, we get

\begin{equation}\label{eq:tfin}
	\langle\sigma_x\sigma_y\rangle_{\Lambda_{Mn}}^f\geq\langle\sigma_x\sigma_y\rangle_{\mathbb{Z}^d}-\sum_{i=1}^{2d}\langle\sigma_x\sigma_{y^i}\rangle_{\mathbb{Z}^d}.
\end{equation}

We remark that even though we assume $\beta=\beta_c$ in the proof, it is not hard to see that \eqref{eq:tfin} actually holds for all $\beta\geq0$. Applying our assumption \eqref{eq:assumption} to \eqref{eq:tfin}, we get
\begin{equation}\label{eq:tfin+assumption}
\langle\sigma_x\sigma_y\rangle_{\Lambda_{Mn}}^f\geq c|x-y|^{-(d-2)}-2dC\left(2(M-1)n\right)^{-(d-2)}
\end{equation}
since $|x-y^i|\geq2(M-1)n$ for each $i$. This completes the proof of the theorem by choosing $M$ large and noting that $|x-y|\leq 2\sqrt{d}n$.
\end{proof}

Finally, we prove Theorem \ref{thm2}.
\begin{proof}[Proof of Theorem \ref{thm2}]
	For each fixed $f\in\mathcal{H}^3(\Lambda_L)$ and varying $a$, one obtains a
uniform exponential moment bound for the sequence of random variables
$\Phi_{\Lambda_L}^{a,0}(f)$ by the GHS inequality~\cite{GHS70} (see the proof of
Proposition 3.5 in \cite{CGN15}). This and our assumptions in the theorem imply
that for any $t\in\mathbb{R}$
	\begin{equation}\label{eq:zerofieldlimit}
		\lim_{a\downarrow0}\left\langle \exp[t\Phi^{a,0}_{\Lambda_L}(f)]\right\rangle_{\Lambda_L^a}^f=
		\exp[t^2\text{Var}(\Phi_{\Lambda_L}(f))/2] ,
	\end{equation}
(noting that the mean of $\Phi_{\Lambda_L}^{a,0}(f)$ is always $0$) where
\begin{equation}\label{eq:zerofieldvar}
	\text{Var}(\Phi_{\Lambda_L}(f)):=\int_{\Lambda_L}\int_{\Lambda_L}f(z)f(w)G_{\Lambda_L}(z,w)dzdw.
\end{equation}

For the near-critical magnetization field, we have
\begin{align}
	&\left\langle \exp[t\Phi^{a,h}_{\Lambda_L}(f)]\right\rangle_{\Lambda_L^a,H}^f=\left\langle \exp[ta^{(d+2)/2}\sum_{x\in\Lambda_L^a}f(x)\sigma_x]\right\rangle_{\Lambda_L^a,H}^f\\
	&\quad=\frac{\sum_{\sigma}\exp\left[\beta_c\sum_{\{x,y\}}\sigma_x\sigma_y+ha^{(d+2)/2}\sum_{x\in\Lambda_L^a}\sigma_x+ta^{(d+2)/2}\sum_{x\in\Lambda_L^a}f(x)\sigma_x\right]}{\sum_{\sigma}\exp\left[\beta_c\sum_{\{x,y\}}\sigma_x\sigma_y+ha^{(d+2)/2}\sum_{x\in\Lambda_L^a}\sigma_x\right]}\\
	&\quad=\frac{\left\langle \exp[\Phi^{a,0}_{\Lambda_L}(f_1)]\right\rangle_{\Lambda_L^a}^f}{\left\langle \exp[\Phi^{a,0}_{\Lambda_L}(f_2)]\right\rangle_{\Lambda_L^a}^f}, \label{eq:hfield1}
\end{align}
where the last equality follows from the second by dividing both the numerator and
denominator by the partition function $Z_{\Lambda_L^a}^f$ after setting
$f_1(z):=h+tf(z)$ and $f_2(z):=h$ for each $z\in\Lambda_L$. Applying
\eqref{eq:zerofieldlimit} and \eqref{eq:zerofieldvar} to \eqref{eq:hfield1}, we get
\begin{align}
	\lim_{a\downarrow0}\left\langle \exp[t\Phi^{a,h}_{\Lambda_L}(f)]\right\rangle_{\Lambda_L^a,H}^f&=\frac{ \exp[\text{Var}(\Phi_{\Lambda_L}(f_1))/2]}{ \exp[\text{Var}(\Phi_{\Lambda_L}(f_2))/2]}\\
	&=\exp\left[th\int_{\Lambda_L}\int_{\Lambda_L}f(z)G_{\Lambda_L}(z,w)dzdw+t^2\text{Var}(\Phi_{\Lambda_L}(f))/2\right],
\end{align}
which completes the proof of the first part of the theorem since the last displayed expression is the moment generating function for the claimed Gaussian distribution.

To prove the second part, let $M^a_{\Lambda_L}(\sigma) := a^{(d+2)/2} \sum_{x \in \Lambda^a_L} \sigma_x$. Since $\Phi_{\Lambda_L}^h$ is obtained by taking the scaling limit of the Ising model with $\beta=\beta_c$ and external field $H=ha^{(d+2)/2}$, we see that for each fixed $\sigma\in\{-1,+1\}^{\Lambda_L^a}$
\begin{align}
\mathbb{P}^f_{\Lambda^a_L,H}(\sigma) & =  \frac{\exp\left[\beta_c\sum_{\{x,y\}}\sigma_x\sigma_y+H\sum_{x\in\Lambda^a_L}\sigma_x\right]}{Z^f_{\Lambda^a_L,H}} \\
& =  \frac{Z^f_{\Lambda^a_L,0}}{Z^f_{\Lambda^a_L,H}} \mathbb{P}^f_{\Lambda^a_L,0}(\sigma) \exp\left[h M^a_{\Lambda_L}(\sigma)\right]\\
& =  \frac{\exp\left[h M^a_{\Lambda_L}(\sigma)\right]}{\left\langle \exp\left[h M^a_{\Lambda_L}(\sigma)\right] \right\rangle^f_{\Lambda^a_L,0}} \mathbb{P}^f_{\Lambda^a_L,0}(\sigma)
\end{align}
or
\begin{equation} \label{eq:Radon-Nikodym}
\frac{d\mathbb{P}^f_{\Lambda^a_L,H}}{d\mathbb{P}^f_{\Lambda^a_L,0}}(\sigma) = \frac{\exp\left[h M^a_{\Lambda_L}(\sigma)\right]}{\left\langle \exp\left[h M^a_{\Lambda_L}(\sigma)\right] \right\rangle^f_{\Lambda^a_L,0}}.
\end{equation}
Taking the scaling limit, and applying \eqref{eq:zerofieldlimit}  with $t=h$ and $f=1[\Lambda_L]$,
the Radon-Nikodym derivative \eqref{eq:Radon-Nikodym} converges weakly to
\begin{equation}
\exp\left[h M_{\Lambda_L} - \frac{h^2}{2}\text{Var}(M_{\Lambda_L})\right],
\end{equation}
where we have used the continuous mapping theorem and the fact that $M^a_{\Lambda_L}$ converges weakly to $M_{\Lambda_L}$.

\end{proof}

\section*{Acknowledgements}
The research of the second author was partially supported by NSFC grant 11901394 and that of the third author by US-NSF grant DMS-1507019. The authors thank Akira Sakai and Gordon Slade for useful comments.

\bibliographystyle{abbrv}
\bibliography{reference}

\end{document}